\pgfplotsset{compat=1.15}
\theoremstyle{plain}
\newtheorem{theorem}{Theorem}[section]
\newtheorem{lemma}[theorem]{Lemma}
\newtheorem{corollary}[theorem]{Corollary}
\theoremstyle{definition}
\newtheorem{definition}[theorem]{Definition}
\newtheorem{example}[theorem]{Example}
\theoremstyle{remark}
\newtheorem{remark}{Remark}
   \tikzset{
   modal/.style={>=stealth,shorten >=1pt,shorten <=1pt,auto,node distance=1.5cm,
   semithick},
   world/.style={circle,draw,minimum size=0.5cm,fill=gray!15},
   point/.style={circle,draw,inner sep=0.5mm,fill=black},
   reflexive above/.style={->,loop,looseness=7,in=120,out=60},
   reflexive below/.style={->,loop,looseness=7,in=240,out=300},
   reflexive left/.style={->,loop,looseness=7,in=150,out=210},
   reflexive right/.style={->,loop,looseness=7,in=30,out=330}
   }
\definecolor{cadmiumgreen}{rgb}{0.0, 0.42, 0.24}
\begin{document}
\pagestyle{myheadings}

\title[Limit points of $A_{\alpha}$-matrices of graphs]{Limit points of $A_{\alpha}$-matrices of graphs}
\subjclass{05C50, 05C05, 15A18}
\keywords{spectral radius; tree;  limit points;}

\author[E. R. Oliveira]{Elismar R. Oliveira}
\address{Instituto de Matem\'atica e Estat\'{\i}stica, UFRGS, Porto Alegre, Brazil}
\email{\tt elismar.oliveira@ufrgs.br}

\author[V. Trevisan]{Vilmar Trevisan}
\address{Instituto de Matem\'atica e Estat\'{\i}stica, UFRGS, Porto Alegre, Brazil }
\email{\tt trevisan@mat.ufrgs.br}

\begin{abstract}
We study limit points of the spectral radii of $A_{\alpha}$-matrices of graphs. Adapting a method used by J. B. Shearer in 1989, we prove a density property of $A_{\alpha}$-limit points of caterpillars for $\alpha$ close to zero. Precisely, we show that for $\alpha \in [0, 1/2)$ there exists a positive number $\tau_2(\alpha)>2$ such that any value $\lambda> \tau_2(\alpha)$ is an $A_{\alpha}$-limit point. We also determine the existence of other intervals for which all its points are $A_{\alpha}$-limit points.
\end{abstract}

\maketitle

\section{Introduction}

The 1972 seminal paper of A. J. Hoffman~\cite{hoffman1972limit} introduced the concept of limit points of eigenvalues of graphs. Let $\mathcal{A}$ be the set of all symmetric matrices of all orders, in which every entry is a non-negative integer, and $R=\{\rho:\rho=\rho(A) \mbox{ for some } A\in\mathcal{A}\}$ where $\rho(A)$ is the largest eigenvalue of $A$. Hoffman asked which real numbers can be limit points of $R$ and showed that it is sufficient to consider matrices of $\mathcal{A}$ having entries in $\{0, 1\}$ and 0 diagonal, e.g. adjacency matrices of graphs. Additionally, he determined all limit points of $R \leq\sqrt{2+\sqrt{5}}$.

In 1989,  a remarkable result due to J. B. Shearer~\cite{shearer1989distribution} extended the work of Hoffman. He showed that every real number larger than $\sqrt{2+\sqrt{5}}$ is a limit point of $R$.  There is a considerable amount of literature originated from this seminal paper, extending the results to other matrices related to graphs, as well as to eigenvalues other than the spectral radius. We refer to the  survey paper \cite{wang2020hoffman} for an account of the many nice results.

In this paper, we are interested in the Hoffman's original question, which deals only with limit points of the spectral radius of graphs. We recall from \cite{nikiforov2017Aalpha} that, for an undirected graph $G$,  the matrix $A_{\alpha}(G):= \alpha D(G)+ (1-\alpha)A(G)$, for $0\leq \alpha\leq 1$. Inspired by the quote of \cite{wang2020hoffman}: \emph{``Therefore, it is reasonable to focus efforts to obtain a formula for the $A_{\alpha}$-counterpart of the $A$-limit point $\sqrt{2+\sqrt{5}}$''}, we study the $A_{\alpha}$ version of Hoffman and Shearer's results.

We translate and generalize Shearer's proof from \cite{shearer1989distribution} using techniques of eigenvalue location. Our main result (Theorem~\ref{thm: main alpha limit one}) is that for any $\alpha \in [0, 1/2)$ there exists a positive number $\tau_2(\alpha)>2$ such that any value $\lambda> \tau_2(\alpha)$ is an $A_{\alpha}$-limit point. Additionally, we study (Theorem~\ref{thm: main alpha limit two}), for small values of $\alpha$, the existence of intervals $[\tau_1(\alpha),\tau_1'(\alpha))$ for which all numbers are also $A_{\alpha}$-limit points.

The paper is organized as follows. In the next Section~\ref{sec:basic}, we set the notation, give the necessary preliminaries and explain the strategy, including the main tool - eigenvalue location, that is used to obtain our results. In Section~\ref{sec:shearer}, we use eigenvalue location to adapt Shearer's method to the $A_{\alpha}$ matrix. In this section, for a given $\lambda >2$, we define a sequence of graphs $G_k$ whose spectral radius is convergent to some (unknown) value $\lambda^*$. Section~\ref{sec:convergence} contains the main result of this note. We develop several convergence criteria so the constructed graph $G_k$ have its spectral radius converging to  $\lambda$. Our main result, Theorem~\ref{thm: main alpha limit one} generalizes Shearer's result for the adjacency matrix to any  $A_{\alpha}$ matrix: For $0 \leq \alpha <1/2$, we find a number $\tau_2(\alpha)$ so that if $\lambda \in [\tau_2(\alpha), +\infty)$ then $\lambda$ is a limit point for the spectral radius of $A_\alpha$. In Section~\ref{sec:small}, we study when $\lambda < \tau_2(\alpha)$. We determine intervals $I$ for which all numbers in $I$ are limit points for $A_\alpha$, for $0\leq \alpha \leq \alpha^{*}:=\frac{3-\sqrt{2}}{7}= 0.226540919+$.

\section{Basic ideas and main tool}\label{sec:basic}
For an undirected graph $G$, let $D(G)$ be its diagonal degree matrix. From \cite{nikiforov2017Aalpha} and \cite{wang2020hoffman}, we introduce, for $0\leq \alpha\leq 1$, the matrix $$A_{\alpha}(G):=A(G)+ \alpha (D(G)-A(G)).$$

It is easy to see that $A_{0}(G)=A(G)$, the adjacency matrix of $G$ and $A_{1}(G)=D(G)$ the degree matrix of $G$. Also, $A_{1/2}(G)=1/2(D(G)+A(G))=1/2 Q(G)$ where $Q$ is the signless Laplacian matrix of $G$. The largest eigenvalue of $A_\alpha(G)$ is the spectral radius and denote by $\rho_\alpha(A_\alpha(G))$

From \cite{WangLiuBel2020} and \cite{wang2020hoffman} we know that for $0\leq \alpha\leq 1$, if $\displaystyle\lim_ {k \to \infty}\rho_{\alpha}(A_{\alpha}(G_{k}))= \lambda$ then $\lambda \geq 2$.

\begin{lemma}[\cite{nikiforov2017Aalpha}, \cite{wang2020hoffman}] \label{lem: propr spectral A alpha}
For every connected graph $G$ with maximum vertex degree $\Delta$, and for every $\alpha \in[0,1]$, the $A_\alpha$-spectral radius $\rho_{A_\alpha}(G)$ satisfies the following properties:\\
(i) $\frac{1}{2}\left(\alpha(\Delta+1)+\sqrt{\alpha^2(\Delta+1)^2+4 \Delta(1-2 \alpha)}\right) \leqslant \rho_{A_\alpha}(G) \leqslant \Delta$;\\
(ii) if $H$ is a proper subgraph of $G$, then $\rho_{A_\alpha}(H)<\rho_{A_\alpha}(G)$;\\
(iii) if $0 \leqslant \alpha<\beta \leqslant 1$, then $\rho_{A_\alpha}(G)<\rho_{A_\beta}(G)$.
\end{lemma}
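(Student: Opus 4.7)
The plan is to prove the three assertions in turn, using Perron--Frobenius tools together with the explicit algebraic structure of $A_\alpha$.

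For the upper bound in (i), I would observe that the $v$-th row sum of $A_\alpha(G)$ equals $\alpha d_v + (1-\alpha) d_v = d_v \leq \Delta$, so since $A_\alpha(G)$ is symmetric and non-negative, its spectral radius is at most its maximum row sum, namely $\Delta$. For the lower bound, the natural comparison is with the star $K_{1,\Delta}$ rooted at a vertex of maximum degree of $G$, which is a subgraph of $G$; once (ii) is established, it suffices to compute $\rho_{A_\alpha}(K_{1,\Delta})$. By symmetry, a Perron eigenvector of the star takes at most two distinct values $x_0$ (centre) and $x_1$ (leaf), leading to the system
\[
\alpha \Delta\, x_0 + (1-\alpha)\Delta\, x_1 = \lambda\, x_0, \qquad (1-\alpha)\, x_0 + \alpha\, x_1 = \lambda\, x_1.
\]
Eliminating $x_1$ gives the quadratic $\lambda^2 - \alpha(\Delta+1)\lambda + \Delta(2\alpha-1) = 0$, whose larger root is exactly the expression claimed in (i).

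For (ii), I would use the classical Perron--Frobenius comparison principle. If $H$ is obtained from $G$ by removing edges and/or vertices, then $A_\alpha(H) \leq A_\alpha(G)$ entrywise: removing an edge decreases the corresponding off-diagonal entry as well as the two associated diagonal entries, and removing a vertex corresponds (after padding with a zero row and column) to annihilating a row and column, which does not increase the spectral radius. Since $G$ is connected, $A_\alpha(G)$ is irreducible, and the standard consequence of Perron--Frobenius (e.g.\ Varga's theorem) that $0 \leq B \leq A$ with $B \neq A$ and $A$ irreducible implies $\rho(B) < \rho(A)$ delivers the strict inequality in (ii).

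For (iii), the identity $A_\beta(G) - A_\alpha(G) = (\beta-\alpha)\bigl(D(G) - A(G)\bigr) = (\beta-\alpha) L(G)$, where $L(G)$ is the combinatorial Laplacian, lets me compare the two matrices through the positive Perron eigenvector $x$ of $A_\alpha(G)$:
\[
\rho_{A_\beta}(G) \;\geq\; \frac{x^{\top} A_\beta(G)\, x}{x^{\top} x} \;=\; \rho_{A_\alpha}(G) + (\beta-\alpha)\,\frac{x^{\top} L(G)\, x}{x^{\top} x},
\]
and $x^{\top} L(G)\, x = \sum_{uv \in E(G)}(x_u - x_v)^2 \geq 0$. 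The expected main obstacle is that this last quantity is strictly positive only when the Perron vector is non-constant on $V(G)$, which (by connectedness) is equivalent to $G$ being non-regular; for a regular $G$ one in fact computes $\rho_{A_\alpha}(G) = \Delta$ independently of $\alpha$, so the strict inequality of (iii) requires either the implicit standing convention of non-regular graphs or a separate treatment of the regular case as equality.
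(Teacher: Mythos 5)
The paper offers no proof of this lemma: it is imported verbatim from \cite{nikiforov2017Aalpha} and \cite{wang2020hoffman}, so there is no internal argument to compare yours against. Your proof is the standard one and is essentially correct. The row-sum bound gives the upper estimate in (i); the comparison with the star $K_{1,\Delta}$ at a vertex of maximum degree, together with (ii), gives the lower estimate, and your elimination does produce $\lambda^{2}-\alpha(\Delta+1)\lambda+\Delta(2\alpha-1)=0$, whose larger root is exactly the stated expression; the entrywise Perron--Frobenius comparison gives (ii); and the Rayleigh-quotient identity $x^{\top}A_{\beta}(G)x=x^{\top}A_{\alpha}(G)x+(\beta-\alpha)\,x^{\top}L(G)x$ gives (iii).

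Two caveats, both of which are really defects of the lemma as transcribed rather than of your argument. First, your proof of (ii) requires $A_{\alpha}(G)$ to be irreducible, which holds only for $\alpha<1$; at $\alpha=1$ the matrix is diagonal and (ii) is genuinely false (take $P_{4}$ as a proper subgraph of $C_{4}$: both have $\rho_{A_{1}}=2$). Second, as you correctly observe, (iii) fails for regular graphs, where $\rho_{A_{\alpha}}(G)=\Delta$ for every $\alpha$, so the strict inequality needs $G$ non-regular. Nikiforov's original propositions carry exactly these hypotheses ($\alpha\in[0,1)$ for subgraph monotonicity, non-regularity for strict monotonicity in $\alpha$), and the statement here silently drops them. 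Neither omission harms the paper, which applies the lemma only to trees with a vertex of degree at least $3$ and to $\alpha<1/2$; but your write-up should promote the regular-case discussion from an aside to an explicit hypothesis, and state the restriction $\alpha<1$ where irreducibility is invoked.
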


Intuitively, we argue that, since the entries of $A_{\alpha}(G)$ depend continuously  on $\alpha$, the limit behaviour of its limit points should resembles the one verified for the adjacency matrix ($A_{0}(G)$) itself.  By formalizing this reasoning, we aim to characterize, for $\alpha$ small enough, intervals consisting of $A_{\alpha}$-limit points.

\subsection{Strategy}\label{sec:strategy}

Our strategy is to prove that the Shearer's argument is stable for $\alpha$ close to $\alpha=0$ because $A_{0}(G)=A(G)$ is the adjacency matrix of $G$, which has good properties and all the limit points are known.

We recall that in \cite{shearer1989distribution} it  was proved that for each $\lambda  \geq \sqrt{2+\sqrt{5}}$ there exists a sequence of caterpillar graphs $G_k$ such that $\displaystyle\lim_ {k \to \infty} \rho(G_k) = \lambda$. So we will look at the distribution of the spectral radius of the $A_{\alpha}$-matrices of this kind of trees.

This will be done by eigenvalue location techniques. We refer to  \cite{BragaRodrigues2017} or \cite{HopJacTrevBook22} for details on applications of the \emph{Algorithm Diagonalize}:\\
\begin{figure}[H]
{\tt
\begin{tabbing}
aaa\=aaa\=aaa\=aaa\=aaa\=aaa\=aaa\=aaa\= \kill
     \> Input: a symmetric matrix $M = (m_{ij})$ with underlying tree $T$\\
     \> Input: a bottom up ordering $v_1,\ldots,v_n$ of $V(T)$\\
     \> Input: a real number $x$ \\
     \> Output: a diagonal matrix $D = \mbox{diag}(d_1, \ldots, d_n)$ congruent to $M + xI$ \\
     \> \\
     \>  Algorithm $\mbox{Diag}(M, x)$ \\
     \> \> initialize $d_i := m_{ii} + x$, for all $i$ \\
     \> \> {\bf for } $k = 1$ to $n$ \\
     \> \> \> {\bf if} $v_k$ is a leaf {\bf then} continue \\
     \> \> \> {\bf else if} $d_c \neq 0$ for all children $c$ of $v_k$ {\bf then} \\
     \> \> \>  \>   $d_k := d_k - \sum \frac{(m_{ck})^2}{d_c}$, summing over all children of $v_k$ \\
     \> \> \> {\bf else } \\
     \> \> \> \> select one child $v_j$ of $v_k$ for which $d_j = 0$  \\
     \> \> \> \> $d_k  := -\frac{(m_{jk})^2}{2}$ \\
     \> \> \> \> $d_j  :=  2$ \\
     \> \> \> \> if $v_k$ has a parent $v_\ell$, remove the edge $\{v_k,v_\ell\}$. \\
     \> \>  {\bf end loop} \\
\end{tabbing}
}
\caption{\label{fig:diagonalize} Diagonalizing $M + xI$ for a symmetric
matrix $M$ associated with the tree $T$.}
\end{figure}
The following theorem summarizes the way in which the algorithm will be applied, we refer to~\cite{HopJacTrevBook22} for details.

\begin{theorem} (Sylvester's Law of Inertia, see \cite[Theorem 4.5.8]{HorJohnson85}),\label{thm: inertia}
Let $M$ be a symmetric matrix of order $n$ that corresponds to a weighted tree $T$ and let $x$ be a real number. Given a bottom-up ordering of $T$, let $D$ be the diagonal matrix produced by Algorithm Diagonalize with entries $T$ and $x$. The following hold:
\begin{itemize}
    \item[(a)] The number of positive entries in the diagonal of $D$ is the number of eigenvalues of $M$ (including multiplicities) that are greater than $-x$.
    \item[(b)] The number of negative entries in the diagonal of $D$ is the number of eigenvalues of $M$ (including multiplicities) that are less than $-x$.
    \item[(c)] The number of zeros in the diagonal of $D$ is the multiplicity of $-x$ as en eigenvalue of $M$.
\end{itemize}
\end{theorem}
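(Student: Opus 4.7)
The plan is to prove this by showing that Algorithm Diagonalize outputs a diagonal matrix $D$ that is congruent to $M+xI$, after which Sylvester's Law of Inertia for real symmetric matrices, as stated in \cite{HorJohnson85}, does all the work: $D$ and $M+xI$ will share the same numbers of positive, negative, and zero eigenvalues. Since the eigenvalues of $M+xI$ are precisely $\lambda_i(M)+x$, the sign of each $\lambda_i(M)+x$ translates directly into the comparison of $\lambda_i(M)$ with $-x$, which is exactly what assertions (a), (b), and (c) record.

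First I would verify the generic step. When $v_k$ is processed and every child $c$ has $d_c \neq 0$, the update $d_k := d_k - \sum_c m_{ck}^2/d_c$ is precisely the symmetric elimination of the rows and columns indexed by the children of $v_k$, using the pivots $d_c$. Because $M$ inherits the tree's support pattern and $v_1,\ldots,v_n$ is a bottom-up order, each already-processed child $c$ has at most one nonzero off-diagonal entry remaining, namely the one connecting it to its unique parent $v_k$; this makes the Schur complement purely local and shows that it is realized by left-and-right multiplication by an elementary congruence matrix. Iterating over the ordering, the final diagonal matrix is congruent to $M+xI$ via a lower-triangular transformation $P$.

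The delicate step is the degenerate branch. When some child $v_j$ of $v_k$ has $d_j=0$ in the partially-processed matrix, the $2\times 2$ principal submatrix in rows and columns $j$ and $k$ reads $\bigl(\begin{smallmatrix}0 & m_{jk}\\ m_{jk} & d_k\end{smallmatrix}\bigr)$. I would exhibit an explicit $2\times 2$ congruence sending this block to $\mathrm{diag}(2,\,-m_{jk}^2/2)$, which justifies the algorithm's reassignments $d_j := 2$ and $d_k := -m_{jk}^2/2$. Removing the edge $\{v_k,v_\ell\}$ afterwards is a bookkeeping device rather than an operation on the matrix: the $\{j,k\}$ block has been fully diagonalized and decoupled, so treating $v_k$ as a leaf relative to its parent $v_\ell$ for the rest of the run preserves congruence with the original $M+xI$.

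Once congruence is established, the three conclusions follow immediately: the positive, negative, and zero diagonal entries of $D$ count the positive, negative, and zero eigenvalues of $M+xI$, and the identity $\lambda_i(M+xI) = \lambda_i(M)+x$ rewrites each of these comparisons in terms of the eigenvalues of $M$ versus $-x$. I expect the main obstacle to be the careful treatment of the degenerate branch, in particular verifying both that the $2\times 2$ congruence has the claimed effect and that the subsequent edge-deletion does not damage global congruence; the rest is a routine induction on the bottom-up order.
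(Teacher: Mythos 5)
The paper does not prove this statement at all: it is quoted as a known result, with the congruence guarantee attributed to Algorithm Diagonalize and the inertia count to \cite[Theorem 4.5.8]{HorJohnson85} and \cite{HopJacTrevBook22}, so there is no in-paper proof to compare against. Your proposal reconstructs exactly the standard argument from those sources --- show that the algorithm realizes a sequence of elementary congruences turning $M+xI$ into $D$, then invoke the classical Sylvester law and the shift $\lambda_i(M+xI)=\lambda_i(M)+x$ --- and as an outline it is correct. Two points in the degenerate branch deserve more care than you give them. First, removing the edge $\{v_k,v_\ell\}$ is \emph{not} mere bookkeeping: it corresponds to a genuine congruence operation in which the zero-pivot row $j$ (whose only nonzero entry is $m_{jk}$) is used to annihilate the $(k,\ell)$ entry, the current diagonal value $d_k$, and the entries joining $v_k$ to any \emph{other} children whose diagonal is also zero; only after this elimination is the $\{j,k\}$ block decoupled from the rest of the matrix, and only then does your $2\times 2$ congruence argument apply (the block $\bigl(\begin{smallmatrix}0 & m_{jk}\\ m_{jk} & a\end{smallmatrix}\bigr)$ has determinant $-m_{jk}^2<0$ for any $a$, hence inertia $(1,1)$, matching $\operatorname{diag}(2,-m_{jk}^2/2)$). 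Second, you should note that several children of $v_k$ may simultaneously have zero diagonal; the algorithm uses only one of them, and the elimination just described leaves the others with diagonal entry $0$ and no remaining off-diagonal entries, which is what makes count (c) come out right. With those two details filled in, the induction on the bottom-up order closes the proof.
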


In our case, we consider $M=A_{\alpha}$ and $x=-\lambda$, where $\lambda$ is the spectral radius of $M$, which in general is not known, obtaining $\operatorname{Diag}(A_{\alpha}, -\lambda)$. We notice that, as $A_{\alpha}(G):=A(G)+ \alpha (D(G)-A(G))= (1-\alpha)A(G)+ \alpha D(G)$, then by construction we initialize the tree with $1-\alpha$ in each edge and $\alpha d(v) -\lambda$ in each vertex $v$.

As an example, which we will use later, we compute the $A_{\alpha}$-limit point for some special starlike trees $T_{1,n,n}$ (one path of length 1 and two paths of length $n$ connected to a root). In \cite{OliveTrevAppDiff} the authors proved that when $n \to \infty$ the spectral radius of the adjacency matrix of $T_{1,n,n}$ converges to $\sqrt{2+\sqrt{5}}=2.058+$.

\begin{theorem}\label{thm: alpha stalike limit} Let $T_{1,n,n}$ be the above described starlike tree, for $n \geq 1$. If $0\leq \alpha \leq 1$ and $n \to \infty$ then \\ $\displaystyle\lim_{k\to \infty} \rho_{\alpha}(A_{\alpha}(T_{1,n,n})) = \tau_{0}(\alpha) \in \left[\sqrt{2+\sqrt{5}} , 3\right]$, $\tau_{0}(\alpha)$ is the only solution of \\ \hspace*{3cm} $3\alpha -\lambda - \frac{(1-\alpha)^2}{\alpha -\lambda}-2\theta'_{\alpha}=0$,\\ where $\theta'_{\alpha}=\frac{(2\alpha-\lambda) +\sqrt{(2\alpha -\lambda)^2  - 4(1-\alpha)^2}}{2}$,  for $\lambda >2$.\\
Moreover, the correspondence $\alpha \mapsto \tau_{0}(\alpha)$ is strictly increasing.
\end{theorem}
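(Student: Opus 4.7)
The plan is to apply Algorithm Diagonalize to $M=A_\alpha(T_{1,n,n})$ with $x=-\lambda$, using a bottom-up ordering that begins at the three pendant endpoints. Since $A_\alpha$ assigns weight $1-\alpha$ to each edge and initializes $\alpha d(v)-\lambda$ at each vertex $v$, processing a pendant leaf gives $d=\alpha-\lambda$, and processing an internal degree-2 vertex along a path produces the recursion
\[
d_{k+1}=f(d_k), \qquad f(d):=(2\alpha-\lambda)-\frac{(1-\alpha)^2}{d}.
\]

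First I would analyze this one-dimensional dynamics. The fixed points of $f$ are the two roots of $d^2-(2\alpha-\lambda)d+(1-\alpha)^2=0$; they are real for every $\lambda>2$ and $\alpha\in[0,1]$ and both negative because their sum $2\alpha-\lambda$ is negative and their product $(1-\alpha)^2$ is positive. Calling them $\theta_\alpha\le\theta'_\alpha<0$, the quantity $\theta'_\alpha$ coincides with the one named in the statement. From $\theta_\alpha\theta'_\alpha=(1-\alpha)^2$ together with $f'(d)=(1-\alpha)^2/d^2$ one reads off $f'(\theta_\alpha)=\theta'_\alpha/\theta_\alpha<1$ and $f'(\theta'_\alpha)=\theta_\alpha/\theta'_\alpha>1$, so $\theta_\alpha$ is attracting and $\theta'_\alpha$ is repelling. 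A short computation shows $d_1=\alpha-\lambda\le\theta_\alpha$ (equivalent to the trivial inequality $\alpha\lambda\ge 2\alpha-1$ for $\lambda>2$), and since $f$ is increasing on $(-\infty,0)$ with $f(d)>d$ on $(-\infty,\theta_\alpha)$, the iterates $d_k$ climb monotonically to $\theta_\alpha$.

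Second I would compute the diagonal entry at the root $r$. Its three children contribute one pendant leaf with value $\alpha-\lambda$ and two long-path neighbors with a common value $d_n$ by symmetry, so
\[
d_r=(3\alpha-\lambda)-\frac{(1-\alpha)^2}{\alpha-\lambda}-\frac{2(1-\alpha)^2}{d_n}\longrightarrow 3\alpha-\lambda-\frac{(1-\alpha)^2}{\alpha-\lambda}-2\theta'_\alpha=:F(\lambda,\alpha),
\]
using $(1-\alpha)^2/\theta_\alpha=\theta'_\alpha$ in the limit. By Lemma~\ref{lem: propr spectral A alpha}(ii) the sequence $\rho_\alpha(A_\alpha(T_{1,n,n}))$ is strictly increasing in $n$, and by (i) it is bounded above by $\Delta=3$, so it converges. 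To identify the limit, I would argue that for any fixed $\lambda$ with $F(\lambda,\alpha)<0$ the algorithm Diag$(A_\alpha(T_{1,n,n}),-\lambda)$ produces only negative entries for all large $n$ (hence $\lambda>\rho_\alpha(T_{1,n,n})$ by Theorem~\ref{thm: inertia}), whereas for $F(\lambda,\alpha)>0$ it eventually produces a positive entry; squeezing yields $\lim_n\rho_\alpha(A_\alpha(T_{1,n,n}))=\tau_0(\alpha)$, the unique root of $F(\cdot,\alpha)=0$ in $(2,3]$. Specializing to $\alpha=0$ reduces the equation to $\lambda^4-4\lambda^2-1=0$ with positive root $\sqrt{2+\sqrt{5}}$; at $\alpha=1$ one has $\theta'_1=0$ and the equation collapses to $\lambda=3$.

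Finally, non-strict monotonicity of $\alpha\mapsto\tau_0(\alpha)$ follows from Lemma~\ref{lem: propr spectral A alpha}(iii) applied to each $T_{1,n,n}$ and passed to the limit; strictness is upgraded by implicit differentiation of $F(\lambda,\alpha)=0$, which gives $d\tau_0/d\alpha=-\partial_\alpha F/\partial_\lambda F>0$ on $[0,1)$. The main obstacle I anticipate is the dynamical-systems step---ruling out the repelling fixed point from the start $d_1=\alpha-\lambda$ and establishing monotone convergence to $\theta_\alpha$ uniformly enough in $\lambda$---together with packaging the sign test from Theorem~\ref{thm: inertia} into the clean squeeze statement for $\lim_n\rho_\alpha(A_\alpha(T_{1,n,n}))$; the remaining verifications are computational.
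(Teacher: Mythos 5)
Your proposal is correct and follows essentially the same route as the paper: apply Algorithm Diagonalize to $A_\alpha(T_{1,n,n})$, analyze the rational recurrence along the two long legs, pass to the attracting fixed point $\theta_\alpha$ in the root entry, and identify the limit as the unique root of $F_0(\lambda,\alpha)=0$ via its sign and monotonicity in $\lambda$. The only difference is one of detail: you spell out the dynamical step (attracting vs.\ repelling fixed points and the monotone climb from $d_1=\alpha-\lambda$) that the paper delegates to a citation, and you phrase the identification of the limit as an explicit squeeze, while the paper establishes strict monotonicity of $\tau_0$ by the same implicit-differentiation computation you propose.
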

\begin{proof}
We notice that $A_{\alpha}(T_{1,n,n}):=A(T_{1,n,n})+ \alpha (D(T_{1,n,n})-A(T_{1,n,n}))= (1-\alpha)A(T_{1,n,n})+ \alpha D(T_{1,n,n})$, so by construction we initialize the tree with $1-\alpha$ in each edge, $3\alpha -\lambda$  at the root, $2\alpha -\lambda$ in each internal vertex of the paths and $\alpha -\lambda$ in each leaf, as illustrated in Figure~\ref{fig:initializeT1nn}.
\begin{figure}[H]
  \centering
  \includegraphics[width=12cm]{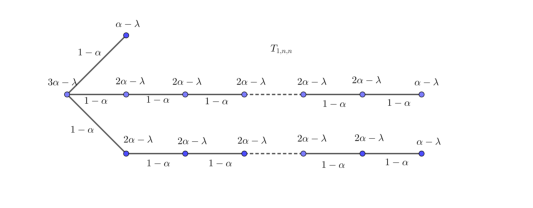}
  \caption{Initializing $\operatorname{Diag}(A_{\alpha}(T_{1,n,n}), -\lambda)$}\label{fig:initializeT1nn}
\end{figure}

After processing the tree, we obtain
\begin{figure}[H]
  \centering
  \includegraphics[width=9cm]{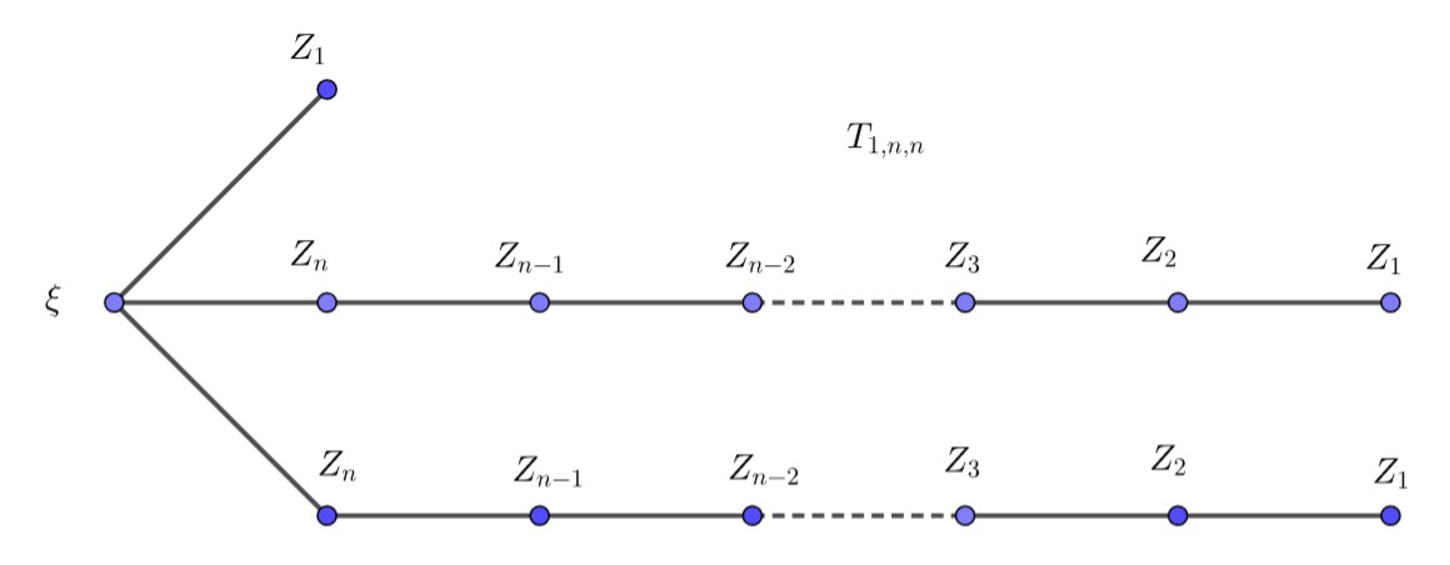}
  \caption{Result of $\operatorname{Diag}(A_{\alpha}(T_{1,n,n}), -\lambda)=(Z_1, \ldots, Z_n, \xi)$}\label{fig:diagonalizeT1nn}
\end{figure}
In Figure~\ref{fig:diagonalizeT1nn} we depicted the output of $\operatorname{Diag}(A_{\alpha}(T_{1,n,n}), -\lambda)$, which we now describe. In each leaf we get $Z_1:=\alpha -\lambda$, and through the paths $P_n$ we obtain $Z_{j+1}:= \varphi_{\alpha}(Z_{j}),\; j\geq 1$ where $\varphi_{\alpha}(t) = 2\alpha -\lambda - \frac{(1-\alpha)^2}{t}, \; t \neq 0$. Finally, at the root, we have
$$\xi:=3\alpha -\lambda - \frac{(1-\alpha)^2}{Z_1}-2 \frac{(1-\alpha)^2}{Z_n}=0.$$

Rational recurrences as $Z_{j+1}:= \varphi_{\alpha}(Z_{j}),\; j\geq 1$ were studied in \cite{OliveTrevAppDiff}. Their behaviour is quite simple and depend on the quantity
$$\Delta_{\alpha}:=(2\alpha -\lambda)^2 + 4(-(1-\alpha)^2)= (2\alpha -\lambda)^2  - 4(1-\alpha)^2.$$
Since $\lambda > 2$, we have
$$-\lambda < -2,$$
$$(2\alpha-\lambda)^2 - 4(1-\alpha)^2 >0.$$

As $\Delta_{\alpha} >0$, we have two distinct fixed points (i.e. $\varphi_{\alpha}(t) = t$),
$$\theta_{\alpha}:=\frac{(2\alpha-\lambda) - \sqrt{(2\alpha -\lambda)^2  - 4(1-\alpha)^2}}{2}$$
and $\theta'_{\alpha}:=\frac{(2\alpha-\lambda) +\sqrt{(2\alpha -\lambda)^2  - 4(1-\alpha)^2}}{2}$ (note that $\theta_{\alpha} \, \theta'_{\alpha}= (1-\alpha)^2 $,  $\theta'_{\alpha} - \theta_{\alpha}= ^{\sqrt{(2\alpha -\lambda)^2  - 4(1-\alpha)^2}} $  and $\theta'_{\alpha} + \theta_{\alpha}= 2\alpha-\lambda $ ). In this case $Z_n \to \theta_{\alpha}$ when $n \to \infty$.

In this way, taking the limit (it exists because we have a sequence of sub-graphs and the maximum degree is bounded, see Lemma~\ref{lem: propr spectral A alpha}) when $n \to \infty$ in $\xi$, we obtain an $A_{\alpha}$-limit point $\lambda$ which satisfies the equation:
$$3\alpha -\lambda - \frac{(1-\alpha)^2}{\alpha -\lambda}-2 \frac{(1-\alpha)^2}{\theta_{\alpha}}=0\text{
or }3\alpha -\lambda - \frac{(1-\alpha)^2}{\alpha -\lambda}-2\theta'_{\alpha}=0,$$
because $\theta_{\alpha} \, \theta'_{\alpha}= (1-\alpha)^2$.

For future purpose, we now define the following function $F_{0}:(2, +\infty)\times [0,1] \to \mathbb{R}$ given by
\begin{equation}\label{eq:F0}
F_{0}(\lambda, \alpha):=3\alpha -\lambda - \frac{(1-\alpha)^2}{\alpha -\lambda}-2\theta'_{\alpha} = \alpha - \frac{(1-\alpha)^2}{\alpha -\lambda} - \sqrt{(2\alpha -\lambda)^2  - 4(1-\alpha)^2}.
\end{equation}
By definition, when $\alpha \to 0$ both $\alpha -\lambda$ and $(2\alpha -\lambda)^2  - 4(1-\alpha)^2$ are respectively non zero and positive, so $F_{0}$ is continuous as algebraic combination of simple functions.

By elementary calculus techniques, one can see that
$\displaystyle\lim_ {\lambda \to 2}F_{0}(\lambda, \alpha) = \frac{1}{2-\alpha}>0$, $\displaystyle\lim_ {\lambda \to \infty}F_{0}(\lambda, \alpha) = -\infty < 0$ and $\frac{\partial F_{0}(\lambda, \alpha)}{\partial \lambda } = -{\frac { \left( 1-\alpha \right) ^{2}}{ \left( \alpha-\lambda
 \right) ^{2}}}+ \,{\frac {2\,\alpha - \,\lambda}{\sqrt { \left( 2\,
\alpha-\lambda \right) ^{2}-4\, \left( 1-\alpha \right) ^{2}}}} <0$
meaning that there exists a unique solution $\lambda$ of $F_{0}(\lambda, \alpha)=0$.

This equation($F_{0}(\lambda, \alpha)=0$) implicitly defines $\lambda$ as a function of $\alpha$. Let $\tau_{0}(\alpha)$ be such function. A tedious computation shows that $\tau_{0}(\alpha):=\lambda$ is a positive root of a polynomial $P_{\alpha}(\lambda)=0$, where $P_\alpha(\lambda)= -{\lambda}^{4}+6\,\alpha\,{\lambda}^{3}+ \left( -8\,{\alpha}^{2}-8\,
\alpha+4 \right) {\lambda}^{2}+ \left( 4\,{\alpha}^{3}+12\,{\alpha}^{2
}-6\,\alpha \right) \lambda-8\,{\alpha}^{3}+8\,{\alpha}^{2}-4\,\alpha+
1$. Moreover, $F_{0}(\lambda, \alpha)<0$ for $\lambda> \tau_{0}(\alpha)$.

For example, for $\alpha=0$ we have $P_{0}(\lambda)= -\lambda^{4}+4\lambda^{2} +1$ so  $\tau_{0}(0)=\sqrt{2+\sqrt{5}}=2.058171027$, as expected from \cite{shearer1989distribution}. Also, when $\alpha \to 1^{-}$ we get $F_{0}(\lambda, 1)=\displaystyle\lim_ {\alpha \to 1}F_{0}(\lambda, \alpha) =  -\sqrt { \left( \lambda-2 \right) ^{2}}+1$ so $F_{0}(\lambda, 1)=0$ only if $\lambda=3$, thus $\tau_{0}(1) =3$. A careful computation shows that $\frac{\partial F_{0}(\lambda, \alpha)}{\partial \alpha} = \left( {\frac { \left( 1-\alpha \right) ^{2}}{ \left( \alpha-\lambda
 \right) ^{2}}}+1 \right) ^{2}+{\frac {2\,\lambda-4}{\sqrt { \left( 2
\,\alpha-\lambda \right) ^{2}-4\, \left( 1-\alpha \right) ^{2}}}}>0$ so $\tau_{0}$ is an increasing function of $\alpha$ (at each root $\lambda=\tau_{0}(\alpha)$  the value of $F_0$ increases with $\alpha$ because $\frac{\partial F_{0}(\lambda, \alpha)}{\partial \alpha}>0$, so the next root is necessarily grater than $\lambda$) as exemplified in the Table~\ref{tab:tau0}.\\
\end{proof}
\begin{table}
  \centering
   \begin{tabular}{|c|r|}
   \hline
   $\alpha$ & $\tau_{0}(\alpha)$\\
   \hline
    0 & 2.058171027  \\
    $10^{-5}$ & 2.058172154  \\
    $10^{-4}$ & 2.058182294  \\
    $10^{-3}$ & 2.058283826  \\
    $10^{-2}$ & 2.059312583  \\
    $10^{-1}$ & 2.071110742  \\
    $0.3$    & 2.111760279  \\
    $0.5$ & 2.191487884\\
    $0.9$ & 2.727297451\\
    $0.9999$ & 2.999700025\\
   \hline
 \end{tabular}  \caption{Values for $\tau_0(\alpha)$.}\label{tab:tau0}
\end{table}

It is worth noticing that in \cite{shan2024ordering} the authors have shown that the $A_\alpha$-spectral radius increases according to the shortlex ordering of the length of its pendant paths. This kind of construction provides a sequence of $A_\alpha$-limit points close to 2 and hopefully the smallest value for an interval of $A_\alpha$-limit points, at least when $\alpha \sim 0$. We will investigate that in the next sections.

\section{Shearer's approach}\label{sec:shearer}
In this section, we follow  Shearer's approach for $A_{\alpha}$-limit points. As a matter of fact, we reinterpret Shearer's work in terms of eigenvalue location for the $A_{\alpha}$ matrix of \textit{caterpillars}. For a given $\lambda > 2$, Shearer constructs a sequence of graphs $G_k(\lambda)$ such that $\rho(G_k(\lambda))\rightarrow \lambda $. For a path $P_k$ with vertices $v_1, \ldots, v_k$, let us denote by $G_{k}(\lambda)=[r_1, r_2, \ldots, r_k]$ the graph, called caterpillar, having  $r_i$ pendant vertices at each vertex, for $i=1,\ldots,k$. We refer to Figure \ref{fig:caterpillar} for an illustration.
\begin{figure}[H]
  \centering
  \includegraphics[width=12cm]{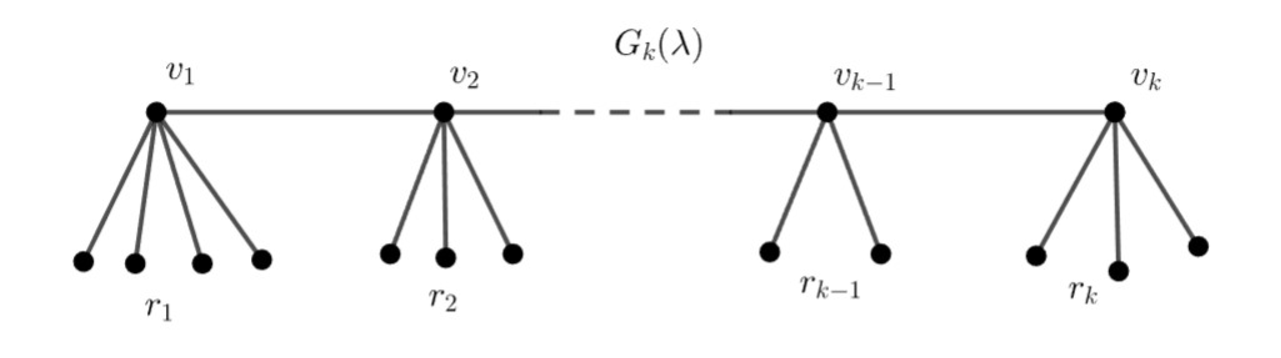}
  \caption{Caterpillar $G_{k}(\lambda)$}\label{fig:caterpillar}
\end{figure}
 We now explain how the technique of eigenvalue location can be used to adapt Shearer's method (for adjacency matrix) to the $A_\alpha$ matrix. We want to have  $2 < \rho(G_k)< \lambda $ (for any  $k$) and the crucial point is to determine the number $r_{k+1}$ of pendant vertices for $G_{k+1}$, keeping this property. We compute $\mbox{Diag}(G_{k}(\lambda), -\lambda)=(b_1, \ldots, b_k)$,  where $b_j$ are the values at the vertices $v_j$, $j=1,\ldots, k$. Because $\lambda$ is larger than the largest eigenvalue of $G_k(\lambda)$, we require  that all the $b_j's$ are negative, by Theorem~\ref{thm: inertia}. At the leaves, we have $\alpha-\lambda<0$ (so there is no need to consider these values)  and in each vertex $v_i$, the number $b_i$ is as follows:\\
$$b_1:=\alpha(r_1 +1) -\lambda -r_1  \frac{(1-\alpha)^2}{\alpha -\lambda}= \alpha -\lambda + r_1 \left(\alpha+ \frac{(1-\alpha)^2}{\lambda-\alpha} \right),$$
$$b_2:=\alpha(r_2 +2) -\lambda - \frac{(1-\alpha)^2}{b_{1}} -r_2  \frac{(1-\alpha)^2}{\alpha -\lambda}= 2\alpha -\lambda - \frac{(1-\alpha)^2}{b_{1}} + r_2 \left(\alpha+ \frac{(1-\alpha)^2}{\lambda-\alpha} \right)$$
and so on, resulting in the recurrence
\[
\left\{
  \begin{array}{ll}
    b_1=\alpha -\lambda + r_1 \left(\alpha+ \frac{(1-\alpha)^2}{\lambda-\alpha} \right)=\alpha -\lambda + r_1 \delta_{\alpha} \\
    b_{j+1}= 2\alpha -\lambda - \frac{(1-\alpha)^2}{b_{j}} + r_{j+1} \left(\alpha+ \frac{(1-\alpha)^2}{\lambda-\alpha} \right)=\varphi_{\alpha}(b_{j})+ r_{j+1} \delta_{\alpha}, 1 \leq j \leq k-1\\
    b_{k}= \alpha -\lambda - \frac{(1-\alpha)^2}{b_{k-1}} + r_{k} \left(\alpha+ \frac{(1-\alpha)^2}{\lambda-\alpha} \right)=-\alpha+ \varphi_{\alpha}(b_{k-1})+ r_{k} \delta_{\alpha}
  \end{array}
\right.
\]
where $\varphi_{\alpha}(t) = 2\alpha -\lambda - \frac{(1-\alpha)^2}{t}, \; t \neq 0$ and $\delta_{\alpha}:=\alpha+ \frac{(1-\alpha)^2}{\lambda-\alpha}$.

We notice that this new sequence $b_j$ differs from $Z_j$ by a multiple $r_j$ of a positive  drift $\delta_{\alpha}=\alpha+ \frac{(1-\alpha)^2}{\lambda-\alpha} \to \frac{1}{\lambda}$ when $\alpha \to 0$.

In order to have $\rho_{\alpha}(A_{\alpha}(G_k)) < \lambda$, we require that $b_{j}<0$ for all $j$ (see Theorem~\ref{thm: inertia}). From \cite{OliveTrevAppDiff} we know that it  is actually necessary to require $b_{j}<\theta'_{\alpha}$ for all $j$. This happens because $\theta'_{\alpha}<0$ is a repelling fixed point (from the right side) so if any iterate satisfy $b_{j}>\theta'_{\alpha}$ then some future $b_{j+m}$ will be positive.

From this, observing that in each step we should make $r_j$ as big as possible (in other words $\theta'_{\alpha} - \delta_{\alpha} < b_{j} < \theta'_{\alpha}$), we obtain the following formula for the $r_j$'s:
\[
\left\{
  \begin{array}{ll}
    \alpha -\lambda + r_1 \delta_{\alpha}<\theta'_{\alpha} \\
    \varphi_{\alpha}(b_{j})+ r_{j+1} \delta_{\alpha}<\theta'_{\alpha}, 1 \leq j \leq k-1\\
    -\alpha+ \varphi_{\alpha}(b_{k-1})+ r_{k} \delta_{\alpha}<\theta'_{\alpha}
  \end{array}
\right.
\]
or
\begin{equation}\label{eq: shearer alpha seq}
\left\{
  \begin{array}{ll}
     r_1=\left\lfloor\frac{1}{\delta_{\alpha}}\left(\theta'_{\alpha} -(\alpha -\lambda)\right)\right\rfloor \\
    r_{j+1}=\left\lfloor\frac{1}{\delta_{\alpha}}\left(\theta'_{\alpha} - \varphi_{\alpha}(b_{j})\right)\right\rfloor, 1 \leq j \leq k-1\\
r_{k}=\left\lfloor\frac{1}{\delta_{\alpha}}\left(\theta'_{\alpha} - \varphi_{\alpha}(b_{k-1})+\alpha \right)\right\rfloor
  \end{array}
\right.
\end{equation}

\begin{definition}\label{def: shearer alpha seq}
   For a given $\lambda >2$, the sequence of caterpillars  $G_k$ satisfying  equation~\eqref{eq: shearer alpha seq} is  the \textit{$\alpha$-Shearer} sequence associated with $\lambda$.
\end{definition}

By construction, the graph $G_k$ satisfies the inequality $\rho_{\alpha}(A_{\alpha}(G_k)) < \lambda$. Also, $G_k$ is always a subgraph of $G_{k+1}$ (the next $r_{k+1} \geq 0$ so we can use it as a pendant path). From Lemma~\ref{lem: propr spectral A alpha},  $\rho_{\alpha}(A_{\alpha}(G_k)) < \rho_{\alpha}(A_{\alpha}(G_{k+1}))$, thus there exists the limit
$$\displaystyle\lim_ {k\to \infty} \rho_{\alpha}(A_{\alpha}(G_k)) \leq \lambda.$$
It remains unclear weather  the equality occurs or not! This is the subject of the next section.

\section{A convergence criterion}\label{sec:convergence}
It is hard to derive a general argument for the convergence from \cite{shearer1989distribution}, however the algorithm $\operatorname{Diag}(G_{k}(\lambda), -\lambda)$ provides a quick answer to that  question. As $\rho_{\alpha}(A_{\alpha}(G_k))$ is an increasing sequence, it will converges to $\lambda$ if, and only if, for any $\varepsilon>0$ there exists $k$ such that  $\rho_{\alpha}(A_{\alpha}(G_k))> \lambda - \varepsilon$, in other words, if some output of $\operatorname{Diag}(G_{k}(\lambda), -(\lambda - \varepsilon))=(b_1(\varepsilon), \ldots, b_k(\varepsilon))$ is positive (every output $ b_k(\varepsilon)$ is a function of $\varepsilon$ defined at some neighborhood of $0$).

From the above consideration we deduce the following criterion.
\begin{theorem}\label{thm: criteria one} Let $G_k$ be the $\alpha$-Shearer sequence associated with $\lambda>2$ and $\varepsilon_k$ the sequence consisting of the smaller positive root of the function $\varepsilon \mapsto b_{k}(\varepsilon)$, for each $k$. Then,
$ \displaystyle\lim_{k\to \infty} \rho_{\alpha}(A_{\alpha}(G_k)) = \lambda,$
if and only if
$ \displaystyle \lim_{k\to \infty} \varepsilon_{k}=0.$
\end{theorem}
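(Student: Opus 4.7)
The plan is to use Sylvester's Law of Inertia (Theorem~\ref{thm: inertia}) as the bridge between the spectral statement $\rho_k := \rho_\alpha(A_\alpha(G_k)) \to \lambda$ and the analytic statement $\varepsilon_k \to 0$ about the functions $\varepsilon \mapsto b_j(\varepsilon)$ produced by $\operatorname{Diag}(G_k, -(\lambda-\varepsilon))$. I would first record that, by Lemma~\ref{lem: propr spectral A alpha}(ii) applied to $G_k \subsetneq G_{k+1}$ together with the Shearer construction~\eqref{eq: shearer alpha seq}, $\rho_k$ is strictly increasing and bounded above by $\lambda$, so the limit $\lambda^{\ast} \le \lambda$ exists. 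The direction ($\Leftarrow$) is then immediate: if $\varepsilon_k \to 0$, the identity $b_k(\varepsilon_k)=0$ combined with Theorem~\ref{thm: inertia}(c) shows that $\lambda-\varepsilon_k$ is an eigenvalue of $A_\alpha(G_k)$, so $\rho_k \ge \lambda - \varepsilon_k$, and the sandwich $\lambda-\varepsilon_k \le \rho_k < \lambda$ forces $\rho_k \to \lambda$.

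The direction ($\Rightarrow$) is where the real work lies. Set $\varepsilon^{\ast}_k := \lambda-\rho_k \to 0$. Since $\rho_k$ is an eigenvalue, Theorem~\ref{thm: inertia}(c) applied to $\operatorname{Diag}(G_k,-\rho_k)$ forces some diagonal entry to vanish; because the leaf entries $\alpha-\lambda+\varepsilon^{\ast}_k$ are strictly negative for $k$ large, the vanishing must occur at an internal vertex, giving $b_{j_k}(\varepsilon^{\ast}_k)=0$ for some $1\le j_k\le k$. If $j_k=k$ then trivially $\varepsilon_k \le \varepsilon^{\ast}_k \to 0$. For the case $j_k<k$, I would exploit the repelling character of the fixed point $\theta'_{\alpha}$ of $\varphi_\alpha$, already invoked in the excerpt's own motivation for the constraint $b_j<\theta'_\alpha$. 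Differentiating the Shearer recurrence at $\varepsilon=0$ yields a linear recurrence for $\partial b_j/\partial\varepsilon$ whose multiplier $(1-\alpha)^2/b_j(0)^2 > (1-\alpha)^2/(\theta'_\alpha)^2 > 1$ produces geometric amplification of the $\varepsilon$-sensitivity as one moves along the caterpillar. Consequently $\partial b_k/\partial\varepsilon$ dominates the other $\partial b_j/\partial\varepsilon$, so $b_k$ is the first of the $b_j$'s to cross zero as $\varepsilon$ increases from $0$, meaning $\varepsilon_k$ differs from $\varepsilon^{\ast}_k$ by a vanishing amount and hence $\varepsilon_k \to 0$.

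The main obstacle is making this amplification argument quantitative and \emph{global} in $\varepsilon$, rather than merely infinitesimal at $\varepsilon=0$. One must verify that the iterates $b_j(\varepsilon)$ remain in a neighborhood of $\theta'_\alpha(\varepsilon)$ for $\varepsilon$ in a uniform interval $[0,\eta)$ independent of $k$, so that the local expansion is valid across all $k$ iteration steps; additionally, the drift contributions $r_{j+1}\delta_\alpha(\varepsilon)$ and the exceptional boundary formulas for $b_1$ (no left neighbor) and $b_k$ (with its $-\alpha$ adjustment) must be accommodated separately. A cleaner fallback is the contrapositive: assuming $\varepsilon_{k_n} \ge \eta > 0$ along a subsequence, the same repelling-dynamics lemma would preclude \emph{any} $b_j(\varepsilon)$ from vanishing in $[0,\eta)$, so $\operatorname{Diag}(G_{k_n},-(\lambda-\eta))$ has no zero or positive diagonal entry and thus $\rho_{k_n} \le \lambda - \eta$, contradicting $\rho_{k_n}\to\lambda$.
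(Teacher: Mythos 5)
Your backward direction is fine and matches the paper: $b_k(\varepsilon_k)=0$ plus Theorem~\ref{thm: inertia}(c) gives $\rho_\alpha(A_\alpha(G_k))\ge \lambda-\varepsilon_k$, and the sandwich closes the argument. The forward direction, however, has a genuine gap. Your primary mechanism rests on the chain $(1-\alpha)^2/b_j(0)^2 > (1-\alpha)^2/(\theta'_\alpha)^2>1$, but the first inequality is reversed: the construction forces $\theta'_\alpha-\delta_\alpha< b_j<\theta'_\alpha<0$, so $b_j^2>(\theta'_\alpha)^2$ and the multiplier $(1-\alpha)^2/b_j^2$ can be \emph{less} than $1$. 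This is not a technicality --- it fails exactly on the range $\tau_0(\alpha)<\lambda<\tau_2(\alpha)$, which is why the paper needs the separate arguments of Theorems~\ref{thm: main alpha limit one} and~\ref{thm: main alpha limit two}; e.g.\ in the paper's second example ($\lambda=2.06$, $\alpha=0.01$) one has $b_1\approx-1.074$, so $(1-\alpha)^2/b_1^2<1$. Moreover, even where the multipliers do exceed $1$, comparing derivatives at $\varepsilon=0$ does not by itself determine which $b_j$ crosses zero first, since the $b_j(0)$ differ.

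The fact you actually need --- that $\varepsilon_k$ is the smallest of the roots, so that all of $b_1(\varepsilon),\dots,b_k(\varepsilon)$ are negative on $(0,\varepsilon_k)$ and hence $\rho_\alpha(A_\alpha(G_k))\le\lambda-\varepsilon$ for every such $\varepsilon$ --- follows from a much more elementary observation, and this is what the paper does. Each $b_j(\varepsilon)$ is strictly increasing in $\varepsilon$ (all terms in $db_j/d\varepsilon$ are positive), is negative at $\varepsilon=0$, and, because of the term $-(1-\alpha)^2/b_{j-1}(\varepsilon)$ with $b_{j-1}(\varepsilon)\to 0^-$, blows up to $+\infty$ as $\varepsilon$ approaches the root $\varepsilon_{j-1}$ of $b_{j-1}$ from below. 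Hence each $b_j$ has a unique root $\varepsilon_j$ with $\varepsilon_j<\varepsilon_{j-1}$, the roots are nested, and in fact $\rho_\alpha(A_\alpha(G_k))=\lambda-\varepsilon_k$, from which both implications are immediate. No repelling-fixed-point or expansion estimate is required, and no uniform-in-$k$ neighborhood of $\theta'_\alpha$ needs to be controlled. Your contrapositive fallback is essentially this argument in disguise; once you replace the ``repelling-dynamics lemma'' by the monotonicity-plus-blow-up induction above, it becomes the paper's proof.
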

\begin{proof}
We translate the above criterion in terms of each output $b_j(\varepsilon)$ of $\operatorname{Diag}(G_{k}(\lambda), -(\lambda - \varepsilon))$. In each case, the function $\varepsilon \mapsto b_j(\varepsilon)$ will be studied.

First we consider $\varepsilon \mapsto b_1(\varepsilon)=\alpha -(\lambda - \varepsilon) + r_1 \delta_{\alpha}(\varepsilon)$, where
$$\delta_{\alpha}(\varepsilon):= \alpha+ \frac{(1-\alpha)^2}{\lambda - \varepsilon -\alpha}.$$

Notice that $b_1(0)=b_1 < 0$ and $b_1(\varepsilon)$ is differentiable for $0<  \varepsilon< \lambda -\alpha$ with
$$\frac{d\, b_1}{d\, \varepsilon}=1+ r_{1}\frac{(1-\alpha)^2}{(\lambda - \varepsilon -\alpha)^2}>0$$
and $\displaystyle\lim_ {\varepsilon\to (\lambda -\alpha)^{-}} b_1(\varepsilon)=+\infty$ meaning that there exists a unique root $\varepsilon_{1} \in (0, \lambda -\alpha) $ of $b_1(\varepsilon)$. In conclusion, $b_1(\varepsilon)<0$ for all $\varepsilon \in (0, \varepsilon_{1})$ and positive if $\varepsilon> \varepsilon_{1}$. So, if we want improve our approximation beyond $\varepsilon_{1}$ we need to look into $b_2(\varepsilon)$ and higher indices.

Now, just to fix the ideas, we consider $\varepsilon \mapsto b_2(\varepsilon)= 2\alpha -(\lambda - \varepsilon) - \frac{(1-\alpha)^2}{b_{1}} + r_{2} \delta_{\alpha}(\varepsilon)$.

Notice that $b_2(0)=b_2 < 0$ and $b_2(\varepsilon)$ is differentiable for $0<  \varepsilon< \varepsilon_{1}< \lambda -\alpha$ with
$$\frac{d\, b_2}{d\, \varepsilon}=1+  \frac{(1-\alpha)^2}{b_{1}^2}\frac{d\, b_1}{d\, \varepsilon} +r_{2} \frac{(1-\alpha)^2}{(\lambda - \varepsilon -\alpha)^2}>0$$
and $\displaystyle\lim_ {\varepsilon\to \varepsilon_{1}^{-}} b_2(\varepsilon)=+\infty$ meaning that there exists a unique root $\varepsilon_{2} \in (0, \varepsilon_{1}) $ of $b_2(\varepsilon)$.  In conclusion, $b_2(\varepsilon)<0$ for all $\varepsilon \in (0, \varepsilon_{2})$ and positive if $\varepsilon > \varepsilon_{2}$.

Proceeding in this way, one can see that our better approximation using $G_k$ is $\varepsilon_{k-1}< \cdots < \varepsilon_{1}$, except for the last output $b_k$. We recall that the function
$\varepsilon \mapsto b_k(\varepsilon)= \alpha -(\lambda - \varepsilon) - \frac{(1-\alpha)^2}{b_{k-1}} + r_{k} \delta_{\alpha}(\varepsilon)$.

Notice that $b_k(0)=b_k < 0$ and $b_k(\varepsilon)$ is differentiable for $0<  \varepsilon< \varepsilon_{k-1}$ with
$$\frac{d\, b_k}{d\, \varepsilon}=1+  \frac{(1-\alpha)^2}{b_{k-1}^2}\frac{d\, b_{k-1}}{d\, \varepsilon} + r_{k} \frac{(1-\alpha)^2}{(\lambda - \varepsilon -\alpha)^2}>0$$
and $\displaystyle\lim_ {\varepsilon\to \varepsilon_{k-1}^{-}} b_k(\varepsilon)=+\infty$ meaning that there exists an unique root $\varepsilon_{k} \in (0, \varepsilon_{k-1}) $ of $b_k(\varepsilon)$.  In conclusion, $b_1(\varepsilon), \ldots, b_k(\varepsilon)<0$ for all $\varepsilon \in (0, \varepsilon_{k})$ and  $b_k(\varepsilon)$ is positive if $\varepsilon > \varepsilon_{k}$.

Now that we have proved that $\varepsilon_{k}$ is a well defined decreasing sequence, in order to complete our proof suppose that $\displaystyle\lim_{k\to \infty} \rho_{\alpha}(A_{\alpha}(G_k)) = \lambda,$ then for any $\varepsilon> 0$ there exists $k$ such that $\rho_{\alpha}(A_{\alpha}(G_k)) > \lambda - \varepsilon$. Since  $\varepsilon_{k}< \varepsilon$ we get $ \displaystyle \lim_{k\to \infty} \varepsilon_{k}=0.$\\
Reciprocally, if $ \displaystyle \lim_{k\to \infty} \varepsilon_{k}=0$, it means that for any $\varepsilon>0$ we can find $k$ such that $b_k(\varepsilon)$ is positive, that is, $\rho_{\alpha}(A_{\alpha}(G_k)) > \lambda - \varepsilon_{k}$. As $ \displaystyle \lim_{k\to \infty} \varepsilon_{k}=0$ we get $\displaystyle\lim_{k\to \infty} \rho_{\alpha}(A_{\alpha}(G_k)) = \lambda$.
\end{proof}

This criterion is fine, but it has only theoretical use, since it requires the computation of all the sequence $\varepsilon_{k}$, so it will be more useful to have a  sufficient computable condition.

A more suitable criterion would be the following result.
\begin{theorem}\label{thm: criteria two} Let $G_k$ be the $\alpha$-Shearer sequence associated with $\lambda>2$ and $\sigma_k= \frac{-b_k}{\frac{d\, b_k}{d\, \varepsilon}(0)}$ the sequence defined by the root of the line tangent to the function $\varepsilon \mapsto b_{k}(\varepsilon)$, at $\varepsilon=0$, for each $k$. Then,
$\displaystyle\lim_{k\to \infty} \rho_{\alpha}(A_{\alpha}(G_k)) = \lambda,$
if and only if
$\displaystyle \lim_{k\to \infty} \sigma_{k}=0.$
\end{theorem}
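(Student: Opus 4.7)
The plan is to reduce everything to Theorem~\ref{thm: criteria one} by sandwiching $\sigma_k$ and $\varepsilon_k$. From the proof of Theorem~\ref{thm: criteria one} I already know that $b_k(0)<0$, that $b_k$ is smooth and strictly increasing on $[0,\varepsilon_{k-1})$, and that $b_k'(\varepsilon)\ge 1$ there, since the derivative recursion displayed in that proof begins with an explicit summand $1$ and all the other summands are nonnegative. Hence $\sigma_k=-b_k(0)/b_k'(0)$ is a well-defined positive number.

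The key structural fact I would establish first is that $b_k$ is convex on $[0,\varepsilon_k]$. I prove this by induction on $k$ using the recursion $b_{j+1}(\varepsilon)=\varphi_\alpha(b_j(\varepsilon))+\varepsilon+r_{j+1}\delta_\alpha(\varepsilon)$, where $\varphi_\alpha(t)=2\alpha-\lambda-(1-\alpha)^2/t$ satisfies $\varphi_\alpha'(t)>0$ and $\varphi_\alpha''(t)=-2(1-\alpha)^2/t^3>0$ on $(-\infty,0)$, while $\delta_\alpha(\varepsilon)$ is convex on $[0,\varepsilon_{k-1})$. Since $b_j$ takes only negative values on $[0,\varepsilon_j]$, the composition $\varphi_\alpha\circ b_j$ of an increasing convex map with a convex map is convex, so $b_{j+1}$ is convex.

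With convexity in hand, the tangent line to $b_k$ at $\varepsilon=0$ lies weakly below $b_k$, so $b_k(\sigma_k)\ge 0=b_k(\varepsilon_k)$ and the monotonicity of $b_k$ gives $\sigma_k\ge\varepsilon_k$. This immediately yields the ``if'' direction: $\sigma_k\to 0$ forces $\varepsilon_k\to 0$, whence $\rho_\alpha(A_\alpha(G_k))\to\lambda$ by Theorem~\ref{thm: criteria one}. For the ``only if'' direction I would Taylor expand around $\varepsilon=0$: there exists $\eta_k\in(0,\varepsilon_k)$ with
\[
0=b_k(\varepsilon_k)=b_k(0)+b_k'(0)\varepsilon_k+\tfrac{1}{2}b_k''(\eta_k)\varepsilon_k^2,
\]
which rearranges to $\sigma_k=\varepsilon_k\bigl(1+\tfrac{b_k''(\eta_k)}{2 b_k'(0)}\varepsilon_k\bigr)$.

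The main obstacle is controlling the correction factor $b_k''(\eta_k)\varepsilon_k/b_k'(0)$ uniformly in $k$. My strategy is to exploit that, by the construction in~\eqref{eq: shearer alpha seq}, every $b_j(0)$ lies in the bounded interval $(\theta'_\alpha-\delta_\alpha,\theta'_\alpha)$, and in particular stays bounded away from $0$; this prevents the parallel recursions for $b_k'$ and $b_k''$ from inflating their ratio, since both are asymptotically driven by the same multiplier $\varphi_\alpha'(\theta'_\alpha)=\theta_\alpha/\theta'_\alpha$ coming from the repelling fixed point. Once the correction factor is shown to be bounded, the hypothesis of convergence delivers $\varepsilon_k\to 0$ via Theorem~\ref{thm: criteria one}, and then $\sigma_k=\varepsilon_k(1+o(1))\to 0$, completing the equivalence.
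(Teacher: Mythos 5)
Your first half is sound and essentially matches the paper: the inductive proof of convexity of $\varepsilon\mapsto b_k(\varepsilon)$ via the composition $\varphi_\alpha\circ b_j$ is a legitimate (and more explicit) version of the paper's remark that $\frac{d^2 b_j}{d\varepsilon^2}>0$, and convexity does give $\varepsilon_k\le\sigma_k$, which settles the ``if'' direction. The genuine gap is in the ``only if'' direction. Your Taylor identity $\sigma_k=\varepsilon_k\left(1+\frac{b_k''(\eta_k)}{2b_k'(0)}\varepsilon_k\right)$ is correct, but the uniform boundedness of the correction factor is exactly the hard step, and you do not prove it; you only announce a strategy. Moreover the heuristic you offer (that $b_k'$ and $b_k''$ are ``driven by the same multiplier'') is not right: differentiating the recursion for $b_k'$ gives
\[
b_k''=\frac{(1-\alpha)^2}{b_{k-1}^2}\,b_{k-1}''-\frac{2(1-\alpha)^2}{b_{k-1}^3}\,(b_{k-1}')^2+\frac{2r_k(1-\alpha)^2}{(\lambda-\varepsilon-\alpha)^3},
\]
which contains a source term quadratic in $b_{k-1}'$. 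In the convergent regime one needs $b_k'(0)\to\infty$ (indeed, since $-b_k(0)>-\theta'_\alpha>0$ is bounded away from zero, $\sigma_k\to 0$ is \emph{equivalent} to $b_k'(0)\to\infty$), so $b_k''$ grows like the square of $b_k'$ and the ratio $b_k''/b_k'$ is unbounded. Whether $b_k''(\eta_k)\varepsilon_k/b_k'(0)$ stays bounded then hinges on comparing $b_k''(\eta_k)$, with $\eta_k$ an unknown point of $(0,\varepsilon_k)$ where $b_k'$ and $b_k''$ may be far larger than at $0$, against $(b_k'(0))^2$. Convexity alone does not exclude a $b_k$ that is flat at $0$ and steep only near $\varepsilon_k$, which is precisely the scenario in which $\varepsilon_k\to 0$ while $\sigma_k$ stays bounded away from $0$; ruling it out is the entire content of the missing step.

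The paper closes this direction by a different device: it claims the two-sided sandwich $\varepsilon_{k}<\sigma_{k}<\varepsilon_{k-1}$, so that $\sigma_k$ is squeezed between consecutive terms of the single null sequence $(\varepsilon_k)$ and both implications follow at once, with no second-derivative estimates. If you want to complete your argument along your own lines, you should replace the second-derivative analysis by an upper bound on $\sigma_k$ in terms of a quantity already known to tend to $0$ (such as $\varepsilon_{k-1}$), i.e.\ show $b_k(0)+b_k'(0)\,\varepsilon_{k-1}>0$; chasing the ratio $b_k''/b_k'$ is the harder and, as written, unfinished route.
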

\begin{proof}
  In order to the result, we take a closer look at the geometrical construction of the $\varepsilon_{j}$'s.
\begin{figure}[H]
  \centering
  \includegraphics[width=12cm]{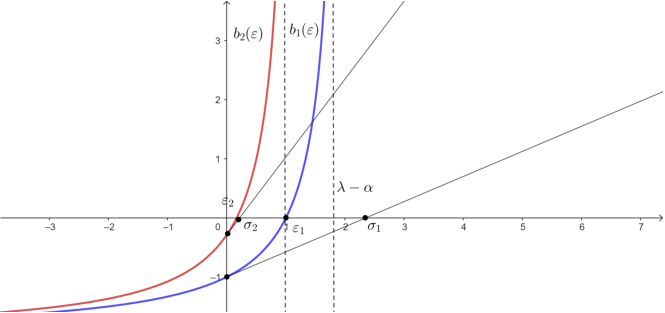}
  \caption{Sequence $\sigma_{k}$}\label{fig:criteria}
\end{figure}

We can see (an easy computation shows that $\frac{d^{2}b_j}{d^{2}\varepsilon}>0$) that each function $b_j(\varepsilon)$ is concave in the interval $(0, \varepsilon_{k})$ thus we can find (see Figure~\ref{fig:criteria}) a sequence of points $\varepsilon_{j+1}< \sigma_{j+1} < \varepsilon_{j}$ (assuming $\varepsilon_{0}= \lambda -\alpha$), so that  $\displaystyle\lim_ {k\to \infty} \varepsilon_{k}=0$ if, and only if, $\displaystyle\lim_ {k\to \infty} \sigma_{k}=0$.

More specifically, $\sigma_{k}$ is the only root of the tangent line of $b_k(\varepsilon)$, by $\varepsilon=0$, that is,
$$0= b_k(0)+ \frac{d\, b_k}{d\, \varepsilon}(0) (\sigma_{k} -0 )$$
or
$$\sigma_{k} = \frac{-b_k}{\frac{d\, b_k}{d\, \varepsilon}(0)}.$$
\end{proof}

We now seek a sufficient condition for the convergence of the limit point of the $\alpha$-Shearer sequence.
\begin{theorem}\label{thm: criteria three} Let $G_k$ be the $\alpha$-Shearer sequence associated with $\lambda>2$. If the numerical sequence
$$\frac{(1-\alpha)^2}{b_{k-1}^2}  +  \frac{(1-\alpha)^2}{b_{k-1}^2}\frac{(1-\alpha)^2}{b_{k-2}^2} + \cdots +  \frac{(1-\alpha)^2}{b_{k-1}^2}\cdots\frac{(1-\alpha)^2}{b_{1}^2}$$
diverges, when $k \to \infty$, then
$$\displaystyle\lim_ {k\to \infty} \rho_{\alpha}(A_{\alpha}(G_k)) = \lambda.$$
\end{theorem}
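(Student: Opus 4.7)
The plan is to deduce the statement from Theorem~\ref{thm: criteria two}: it suffices to show that the divergence hypothesis forces the sequence $\sigma_k = -b_k/\frac{db_k}{d\varepsilon}(0)$ to zero. Since $\sigma_k$ is a ratio, I would split the work into (a) a uniform upper bound on $|b_k|$ and (b) a lower bound on the denominator that matches the hypothesis sum.

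For the denominator, I would unfold the linear recurrence already recorded in the proof of Theorem~\ref{thm: criteria two}, namely
$$\frac{db_{j+1}}{d\varepsilon}(0)=1+\frac{(1-\alpha)^2}{b_j^{\,2}}\,\frac{db_j}{d\varepsilon}(0)+r_{j+1}\frac{(1-\alpha)^2}{(\lambda-\alpha)^2},$$
starting from $\frac{db_1}{d\varepsilon}(0)=1+r_1(1-\alpha)^2/(\lambda-\alpha)^2\ge 1$. Telescoping this identity and discarding the nonnegative $r_j$-contributions yields
$$\frac{db_k}{d\varepsilon}(0)\;\ge\;1+\frac{(1-\alpha)^2}{b_{k-1}^{\,2}}+\frac{(1-\alpha)^2}{b_{k-1}^{\,2}}\frac{(1-\alpha)^2}{b_{k-2}^{\,2}}+\cdots+\prod_{j=1}^{k-1}\frac{(1-\alpha)^2}{b_j^{\,2}},$$
which is precisely $1$ plus the series appearing in the hypothesis. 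Consequently, the divergence assumption forces $\frac{db_k}{d\varepsilon}(0)\to\infty$.

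For the numerator, I would invoke the very construction~\eqref{eq: shearer alpha seq} of the $\alpha$-Shearer sequence: since each $r_j$ is chosen as a floor designed to push $b_j$ as close as possible to $\theta'_\alpha$ from below, one has $\theta'_\alpha-\delta_\alpha<b_j<\theta'_\alpha$ for every $j\le k$, with the extra $-\alpha$ at index $k$ already absorbed into the choice of $r_k$. Since $\theta'_\alpha$ and $\delta_\alpha$ depend only on $\lambda$ and $\alpha$, this gives a uniform bound $|b_k|\le C(\lambda,\alpha)$ independent of $k$. Combining (a) and (b), $\sigma_k\to 0$ and Theorem~\ref{thm: criteria two} finishes the proof.

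The main obstacle, as I see it, is essentially a bookkeeping issue: one must verify that the derivative recurrence and the trapping interval $(\theta'_\alpha-\delta_\alpha,\theta'_\alpha)$ continue to apply at the anomalous last index $k$, where the defining formula for $b_k$ carries an additional $-\alpha$ term. Both facts follow directly from the form of $r_k$ in~\eqref{eq: shearer alpha seq}; once settled, the remainder of the argument reduces to the routine observation that a bounded sequence divided by a divergent one tends to zero.
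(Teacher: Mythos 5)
Your proposal is correct and follows essentially the same route as the paper: the authors likewise unfold the derivative recurrence to bound $\frac{d\, b_k}{d\, \varepsilon}(0)$ from below by $1$ plus the hypothesis sum, and then use the trapping bounds $\theta'_{\alpha}-\delta_{\alpha}<b_k<\theta'_{\alpha}$ to conclude that $\sigma_k\to 0$ via Theorem~\ref{thm: criteria two}. The only difference is cosmetic: the paper does not explicitly discuss the anomalous last index, which you rightly flag as a bookkeeping point.
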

\begin{proof}
We recall that
$$\frac{d\, b_k}{d\, \varepsilon}=1+  \frac{(1-\alpha)^2}{b_{k-1}^2}\frac{d\, b_{k-1}}{d\, \varepsilon} + r_{k} \frac{(1-\alpha)^2}{(\lambda - \varepsilon -\alpha)^2}>1+  \frac{(1-\alpha)^2}{b_{k-1}^2}\frac{d\, b_{k-1}}{d\, \varepsilon}$$
and repeating this evaluation we get
$$\frac{d\, b_k}{d\, \varepsilon}> 1+  \frac{(1-\alpha)^2}{b_{k-1}^2}\left(1+  \frac{(1-\alpha)^2}{b_{k-2}^2}\frac{d\, b_{k-2}}{d\, \varepsilon}\right)$$
$$= 1+  \frac{(1-\alpha)^2}{b_{k-1}^2}  +  \frac{(1-\alpha)^2}{b_{k-1}^2}\frac{(1-\alpha)^2}{b_{k-2}^2}\frac{d\, b_{k-2}}{d\, \varepsilon} $$
and finally
$$\frac{d\, b_k}{d\, \varepsilon}>  1+  \frac{(1-\alpha)^2}{b_{k-1}^2}  +  \frac{(1-\alpha)^2}{b_{k-1}^2}\frac{(1-\alpha)^2}{b_{k-2}^2} + \cdots +  \frac{(1-\alpha)^2}{b_{k-1}^2}\cdots\frac{(1-\alpha)^2}{b_{1}^2}.$$

When $\frac{(1-\alpha)^2}{b_{k-1}^2}  +  \frac{(1-\alpha)^2}{b_{k-1}^2}\frac{(1-\alpha)^2}{b_{k-2}^2} + \cdots +  \frac{(1-\alpha)^2}{b_{k-1}^2}\cdots\frac{(1-\alpha)^2}{b_{1}^2} \to \infty$ we also get  $\frac{d\, b_k}{d\, \varepsilon}(0) \to \infty$ so that $\sigma_{k} = \frac{-b_k}{\frac{d\, b_k}{d\, \varepsilon}(0)}\to 0$  because $\theta'_{\alpha} -\delta_{\alpha} < b_k < \theta'_{\alpha}$.
\end{proof}

The next two statements are the main results of this paper and are consequences of Theorem~\ref{thm: criteria three}.

\begin{theorem}\label{thm: main alpha limit one} Let $\alpha <1/2$, and $\delta_{\alpha}=\alpha+ \frac{(1-\alpha)^2}{\lambda-\alpha}$.
Then $\lambda$ is an $A_{\alpha}$-limit point for any $\lambda\geq \tau_{2}(\alpha)$, where $\tau_{2}(\alpha)$ is the only solution of $-1+\alpha= \theta'_{\alpha} - \delta_{\alpha}$.
\end{theorem}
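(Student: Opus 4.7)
The plan is to apply the sufficient divergence criterion of Theorem~\ref{thm: criteria three}. Concretely, I would show that for $\lambda \geq \tau_{2}(\alpha)$, every term in the series
\[
\frac{(1-\alpha)^2}{b_{k-1}^2}+\frac{(1-\alpha)^2}{b_{k-1}^2}\frac{(1-\alpha)^2}{b_{k-2}^2}+\cdots+\prod_{i=1}^{k-1}\frac{(1-\alpha)^2}{b_i^2}
\]
is bounded below by $1$, so the sum diverges as $k\to\infty$ and the limit point equals $\lambda$.

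The central uniform bound is the Shearer window $\theta'_\alpha-\delta_\alpha < b_j < \theta'_\alpha$ guaranteed by the construction of the $\alpha$-Shearer sequence. Since $\theta'_\alpha < 0$ and $\delta_\alpha >0$, both endpoints are negative, with $|\theta'_\alpha-\delta_\alpha|$ the larger absolute value. Hence $b_j^2 \le (\theta'_\alpha-\delta_\alpha)^2$, which yields
\[
\frac{(1-\alpha)^2}{b_j^2}\;\ge\;\frac{(1-\alpha)^2}{(\theta'_\alpha-\delta_\alpha)^2}
\]
for every $j$. This lower bound is $\ge 1$ precisely when $|\theta'_\alpha-\delta_\alpha|\le 1-\alpha$, i.e. when $\alpha-1\le\theta'_\alpha-\delta_\alpha$, which is exactly the inequality $-1+\alpha\le\theta'_\alpha-\delta_\alpha$ obtained by swapping the equality in the statement with the weaker side.

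To finish, I would establish that the set of $\lambda>2$ satisfying $-1+\alpha\le\theta'_\alpha-\delta_\alpha$ is precisely an interval $[\tau_2(\alpha),+\infty)$. For this, I would analyze the real-valued map $\lambda\mapsto \theta'_\alpha(\lambda)-\delta_\alpha(\lambda)$ on $(2,+\infty)$: at the left endpoint, a direct substitution $\lambda=2$ gives $\theta'_\alpha=\alpha-1$ (the discriminant vanishes), so $\theta'_\alpha-\delta_\alpha=\alpha-1-\delta_\alpha<\alpha-1$; as $\lambda\to\infty$, expanding $\theta'_\alpha$ asymptotically gives $\theta'_\alpha\to 0^-$ and $\delta_\alpha\to\alpha$, so $\theta'_\alpha-\delta_\alpha\to -\alpha$. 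The hypothesis $\alpha<1/2$ is exactly what guarantees $-\alpha>\alpha-1$, so the target value $\alpha-1$ lies strictly between the two endpoints. Then a monotonicity argument (differentiating the formulas for $\theta'_\alpha$ and $\delta_\alpha$ in $\lambda$ and checking both contributions have the same sign) provides strict monotonicity of $\theta'_\alpha-\delta_\alpha$ in $\lambda$, giving a unique $\tau_2(\alpha)>2$ at which equality holds.

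The main obstacle is the monotonicity verification in the last step: the derivative of $\theta'_\alpha$ with respect to $\lambda$ involves the square root in the discriminant and must be combined carefully with the derivative of $-\delta_\alpha$, which has the opposite naive sign. One clean way around this is to avoid strict monotonicity altogether and just use continuity plus the two limit computations to define $\tau_2(\alpha)$ as the infimum of those $\lambda$ for which $-1+\alpha\le\theta'_\alpha-\delta_\alpha$; because the set is closed and unbounded above (since the quantity $-\alpha$ is strictly greater than $\alpha-1$ under $\alpha<1/2$), this infimum is attained and every $\lambda\ge\tau_2(\alpha)$ satisfies the divergence condition. Once that is in place, Theorem~\ref{thm: criteria three} delivers $\lim_{k\to\infty}\rho_\alpha(A_\alpha(G_k))=\lambda$, so $\lambda$ is an $A_\alpha$-limit point, completing the proof.
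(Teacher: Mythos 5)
Your primary route is exactly the paper's proof: reduce to Theorem~\ref{thm: criteria three} via the window $\theta'_{\alpha}-\delta_{\alpha}<b_j<\theta'_{\alpha}$, note that $-1+\alpha\le\theta'_{\alpha}-\delta_{\alpha}$ forces every factor $\frac{(1-\alpha)^2}{b_j^2}$ to be at least $1$ (hence divergence of the sum), and then show by the two endpoint limits and monotonicity in $\lambda$ that this inequality holds precisely on $[\tau_2(\alpha),\infty)$ when $\alpha<1/2$. Two remarks. First, the monotonicity you flag as the main obstacle is not one: $\frac{d}{d\lambda}\theta'_{\alpha}=\frac{1}{2}\left(-1+\frac{\lambda-2\alpha}{\sqrt{(2\alpha-\lambda)^2-4(1-\alpha)^2}}\right)>0$ because the square root is smaller than $\lambda-2\alpha$, and $\frac{d}{d\lambda}(-\delta_{\alpha})=\frac{(1-\alpha)^2}{(\lambda-\alpha)^2}>0$, so both contributions to $\theta'_{\alpha}-\delta_{\alpha}$ are strictly increasing and the crossing of the level $-1+\alpha$ is unique; this is precisely the paper's computation $\partial F_2/\partial\lambda<0$ for $F_2:=-1+\alpha+\delta_{\alpha}-\theta'_{\alpha}$. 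Second, your fallback is logically insufficient as stated: a closed subset of $(2,\infty)$ that is unbounded above and whose infimum is attained need not contain every $\lambda$ above that infimum, so defining $\tau_2(\alpha)$ as an infimum does not yield the conclusion for \emph{all} $\lambda\ge\tau_2(\alpha)$ without the interval structure that monotonicity provides. Keep the derivative argument and drop the fallback. A small bonus of your version: combining the weak inequality $-1+\alpha\le\theta'_{\alpha}-\delta_{\alpha}$ with the strict bound $b_j>\theta'_{\alpha}-\delta_{\alpha}$ covers the endpoint $\lambda=\tau_2(\alpha)$ directly, whereas the paper treats it separately by approximating $\tau_2(\alpha)$ from above by a decreasing sequence of limit points.
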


\begin{proof}
Let $G_k$ be the $\alpha$-Shearer sequence associated with $\lambda>2$. We want to show that $\displaystyle\lim_ {k\to \infty} \rho_{\alpha}(A_{\alpha}(G_k)) = \lambda$. In order to use Theorem~\ref{thm: criteria three} it is sufficient to show that
$b_{j}^2 < (1-\alpha)^2$  because it is equivalent to $ 1 < \frac{(1-\alpha)^2}{b_{j}^2}$, which causes the  divergence of the summation $\frac{(1-\alpha)^2}{b_{k-1}^2}  +  \frac{(1-\alpha)^2}{b_{k-1}^2}\frac{(1-\alpha)^2}{b_{k-2}^2} + \cdots +  \frac{(1-\alpha)^2}{b_{k-1}^2}\cdots\frac{(1-\alpha)^2}{b_{1}^2}$.

Since $b_{j}<0$ we obtain the equivalent condition $-1+\alpha< b_{j}$. By definition,
$$\theta'_{\alpha} - \delta_{\alpha} < b_{j} < \theta'_{\alpha}$$
so it is enough to show that $-1+\alpha< \theta'_{\alpha} - \delta_{\alpha},$ or explicitly
$$ -1 +2\alpha + \frac{(1-\alpha)^2}{\lambda -\alpha} < \frac{(2\alpha-\lambda) +\sqrt{(2\alpha -\lambda)^2  - 4(1-\alpha)^2}}{2}. $$
For future purpose we now define the following function $F_{2}:(2, +\infty)\times [0,1) \to \mathbb{R}$ given by
$$F_{2}(\lambda, \alpha):=-1+\alpha + \delta_{\alpha}- \theta'_{\alpha}=1 +2\alpha + \frac{(1-\alpha)^2}{\lambda -\alpha} - \frac{(2\alpha-\lambda) +\sqrt{(2\alpha -\lambda)^2  - 4(1-\alpha)^2}}{2}.$$
By definition, when $\alpha \to 0$ both $\alpha -\lambda$ and $(2\alpha -\lambda)^2  - 4(1-\alpha)^2$ are respectively non zero and positive, so $F_{2}$ is continuous as algebraic combination of simple functions.

Moreover, we have
$$\frac{\partial F_{2}}{\partial \lambda} = -\frac{\left(1-\alpha \right)^{2}}{\left(\lambda -\alpha \right)^{2}}+\frac{1}{2}+\frac{2 \alpha -\lambda}{2 \sqrt{\left(2 \alpha -\lambda \right)^{2}-4 \left(1-\alpha \right)^{2}}}$$
so $F_{2}$ is differentiable with respect to $\lambda$ in its domain.

\begin{figure}[H]
  \centering
  \includegraphics[width=10cm]{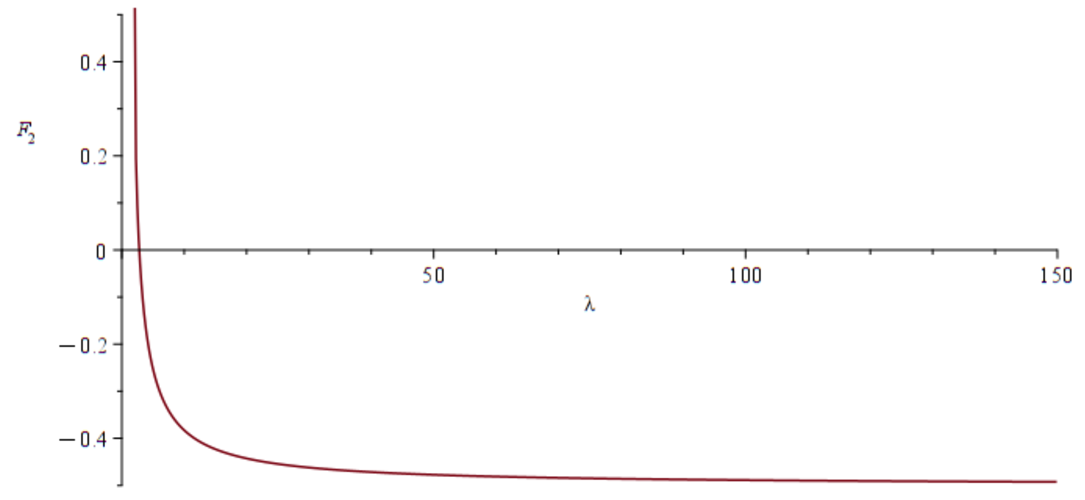}
  \caption{The graph of $F_{2}(\lambda, 1/4)$.}\label{fig:f2}
\end{figure}

We notice that the following chain of equivalences holds
$$\frac{1}{2}+\frac{2 \alpha -\lambda}{2 \sqrt{\left(2 \alpha -\lambda \right)^{2}-4 \left(1-\alpha \right)^{2}}}<0$$
$$\frac{4 \left(1-\alpha \right)^{2}}{\left(2 \alpha -\lambda \right)^{2}}>0,$$
which is always the case for $\alpha < 1$.

We notice that $\frac{\partial F_{2}}{\partial \lambda}<0$, $\displaystyle\lim_ {\lambda \to 2^{+}} F_{2}(\lambda, \alpha)= \alpha +\frac{\left(1-\alpha \right)^{2}}{2-\alpha}>0$ and $\displaystyle\lim_ {\lambda \to +\infty} F_{2}(\lambda, \alpha)= -1+2\alpha <0$ thus, there exists a unique  $\lambda:=\tau_{2}(\alpha)$ such that $ F_{2}(\lambda, \alpha)<0$ for all $\lambda >\tau_{2}(\alpha)$, see Figure~\ref{fig:f2}. Our reasoning only works for $\alpha <1/2$, because if $\alpha \to 1/2$ then $\displaystyle\lim_ {\lambda \to +\infty} F_{2}(\lambda, \alpha)= -1+2\alpha = 0$ thus $F_{2}(\lambda, \alpha)=0$ has no solution and $F_{2}(\lambda, \alpha)>0$ for all $\lambda>2$.

In conclusion, for each $0 \leq \alpha <1/2$ there will be a number $\tau_{2}(\alpha)$ such that $\displaystyle\lim_ {k\to \infty} \rho_{\alpha}(A_{\alpha}(G_k)) = \lambda,$ for $\lambda >  \tau_{2}(\alpha)$.  For $\lambda = \tau_{2}(\alpha)$ we can take a decreasing sequence $\lambda_{n} \to  \tau_{2}(\alpha)$ to prove that $\tau_{2}(\alpha)$ is an $A_{\alpha}$-limit point.
\end{proof}

As expected, for $\alpha=0$ we get this inequality true for $\lambda > \tau_{2}(0)= 1/6\,\sqrt [3]{108+12\,\sqrt {69}}+2\,{\frac {1}{\sqrt [3]{108+12\,
\sqrt {69}}}}+1 = 2.324717957$ as predicted from \cite{shearer1989distribution} that is, $\displaystyle\lim_ {k\to \infty} \rho_{\alpha}(A_{0}(G_k)) = \lambda,$ for $\lambda >  2.324717957$. In that work the proof is separated in two parts, the first one is $-1< b_k$ works only for $\lambda >  2.324+$.

By numerical inspection, the correspondence $\alpha \mapsto \tau_{2}(\alpha)$ appears to be increasing with respect to $\alpha$. It seems that $\tau_{2}(\alpha) \to \infty$ when $\alpha \to 1/2$.
Indeed $\tau_{2}(0.499999)= 2501.750025$. Not by coincidence, $Q(G)=2 A_{1/2}(G)$ where $Q$ is the signless Laplacian, leading to investigate the distribution of the Laplacian limit points, which is, as far as we can see, a very hard problem! It will be investigated in a future research.
For example, for $\alpha=1/4$(see Figure~\ref{fig:f2}) we see that $\tau_{2}(1/4)= 2.795171086$ that is, $\displaystyle\lim_ {k\to \infty} \rho_{\alpha}(A_{1/4}(G_k)) = \lambda,$ for $\lambda >  2.795171086$. Table~ \ref{table:thau_2-values} shows the behaviour of $\tau_{2}(\alpha)$.
\begin{table}[H]
  \centering
  \begin{tabular}{|c|r|}
   \hline
   $\alpha$ & $\tau_{2}(\alpha)$\\
   \hline
    0 & 2.324717958  \\
    $10^{-5}$ & 2.324726949  \\
    $10^{-4}$ & 2.324807890  \\
    $10^{-3}$ & 2.325619037  \\
    $10^{-2}$ & 2.333907609  \\
    $10^{-1}$ & 2.439018189  \\
    $0.4$ & 4.271267076  \\
    $0.49$ & 26.75245169   \\
    $0.499$ & 251.7502495   \\
   \hline
 \end{tabular}
  \caption{Values of $\tau_{2}(\alpha)$.}\label{table:thau_2-values}
\end{table}\begin{center}

\end{center}

\begin{remark}
  A closer look at Theorem~\ref{thm: main alpha limit one} shows that we can actually control the speed in which $\displaystyle\lim_ {k\to \infty} \rho_{\alpha}(A_{\alpha}(G_k)) = \lambda$. Indeed, in the proof of Theorem~\ref{thm: main alpha limit one} we obtain
$1 < \frac{(1-\alpha)^2}{b_{j}^2}$ for $\lambda > \tau_{2}(\alpha)$. In particular, from Theorem~\ref{thm: criteria two} we have $\varepsilon_{k}< \sigma_{k} = \frac{-b_k}{\frac{d\, b_k}{d\, \varepsilon}(0)}$ and, from the proof of Theorem~\ref{thm: criteria three}, we also know that
$$\frac{d\, b_k}{d\, \varepsilon}(0) >  1+  \frac{(1-\alpha)^2}{b_{k-1}^2}  +  \frac{(1-\alpha)^2}{b_{k-1}^2}\frac{(1-\alpha)^2}{b_{k-2}^2} + \cdots +  \frac{(1-\alpha)^2}{b_{k-1}^2}\cdots\frac{(1-\alpha)^2}{b_{1}^2}> k.$$
Thus
$$ \lambda - \rho_{\alpha}(A_{\alpha}(G_k)) < \varepsilon_{k} < \sigma_{k} = \frac{-b_k}{\frac{d\, b_k}{d\, \varepsilon}(0)}<\frac{-b_k}{k}< \frac{\delta_{\alpha}- \theta'_{\alpha}}{k}.$$
Since $C:=\delta_{\alpha}- \theta'_{\alpha}$ is a fixed number depending on $\lambda $ and $\alpha$ we obtain that the error $| \lambda - \rho_{\alpha}(A_{\alpha}(G_k))|$ decays as $C \frac{1}{k}$ for $\alpha < 1/2$ and $\lambda > \tau_{2}(\alpha)$.
\end{remark}

\begin{example}
   Consider $\lambda=2.44$ and $\alpha=0.1$. We notice that $\lambda > \tau_{2}(\alpha)= 2.439018189$ so Theorem~\ref{thm: main alpha limit one} holds, that is, if  $G_{k}(\lambda)=[r_1, r_2, \ldots, r_k]$ is the $\alpha$-Shearer sequence associated to $\lambda$ then $\displaystyle\lim_ {k\to \infty} \rho_{0.1}(A_{0.1}(G_k)) = 2.44.$\\
   As an illustration, we can check numerically the reasoning used at the proof. Taking $k=100$, we obtain\\
    $G_{k}(\lambda)=[4, 0, 1, 1, 1, 1, 0, 1, 1, 1, 1, 1, 1,1, 1, 1, 1, 1, 1, 1, 1, 0, 0, 1, 1, 1, 1, 1,1, 0, 1, 1, 1, 1, 1, 1,$\\ $ 1, 1, 1, 1, 1, 1, 1, 1, 0, 1, 1, 1, 1, 1, 1, 1, 1, 0, 1, 0, 1, 1, 1, 1, 1, 1, 1, 1,1, 1, 1, 1, 1, 1, 1,0, 0, 1, 1, 1,$\\ $ 1, 1, 1, 1, 1, 0, 1, 1, 1, 1, 1, 1, 1, 1, 1, 1, 0, 1, 1, 1, 0, 1, 0, 0]$.\\
   The diagonalization algorithm produces\\
   $\operatorname{Diag}(G_{100}(\lambda), -\lambda)=(b_1, \ldots, b_{100})= [-0.555, -0.782, -0.757, -0.724, -0.676, -0.595,$\\ $ -0.879, -0.873, -0.866, -0.858, -0.85, -0.841, -0.83, -0.818, -0.804, -0.786, ...,$\\
    $ -0.625, -0.499, -0.616, -0.478, -0.546, -0.856]$.\\
   For $j=1,..., 100$ we have $-0.9=-1+\alpha< b_{j}$  as it is suppose to be.   By the way, a numerical computation shows that $\rho_{0.1}(A_{0.1}(G_{100})) \simeq  2.4399999999999995$.
\end{example}

\section{Small values of $\lambda$}\label{sec:small}

A more difficult problem is to consider $A_{\alpha}$-limit points for $\lambda < \tau_{2}(\alpha)$. We recall that the proof in \cite{shearer1989distribution} is for $\alpha=0$, and it is separated in two parts; the first one is when $-1< b_k$ and works only for $\lambda >  2.324+$. After that, the author analyses the interval $2.058+ < \lambda < 2.324+$ where one can have $b_k< -1$, but a rearrangement of the product will ensure the result. We now seek the analogous result for $\alpha$ close to 0.
In pursuing that, we obtain some new features of our approach. In particular, we find an upper bound $\tau_{1}'(\alpha)< \tau_{2}(\alpha)$  that may produce intervals of unknown behaviour.  More precisely, we will show that when $\alpha$ is small, there are numbers $\tau_{1}(\alpha)< \tau_{1}'(\alpha)< \tau_{2}(\alpha)$ such that any $\lambda \in (\tau_{1}(\alpha), \tau_{1}'(\alpha))$  is an $A_{\alpha}$-limit point. In some cases, this feature produces gaps in the density  of the $A_{\alpha}$-limit point. In other words, for some values of $\alpha$, there exist intervals for which we do not know whether their points are $A_{\alpha}$-limit points.

\begin{theorem}\label{thm: main alpha limit two} Let $\alpha^{*}:=\frac{3-\sqrt{2}}{7}= 0.226540919+$ and consider $\alpha \in [0,\alpha^{*})$.\\
Then any $\lambda \in [\tau_{1}(\alpha), \tau'_{1}(\alpha))$ is an $A_{\alpha}$-limit point,
where $\tau_{1}(\alpha)< \tau'_{1}(\alpha)$ are,\\ for $0<\alpha<\alpha^{*}$,  the  solutions of  $\left(2\alpha -\lambda +\delta_{\alpha}\right) \left(\theta'_{\alpha} - \delta_{\alpha} \right) =2(1-\alpha)^2$, and\\
for $\alpha=0$, $\tau_{1}(\alpha)=\sqrt{2+\sqrt{5}}$ and $ \tau'_{1}(\alpha)=\infty$.  \\

\noindent In particular the number $\alpha^{*}$ satisfies  $\tau_{1}(\alpha^{*})=\tau'_{1}(\alpha^{*})$ and is the largest positive number such that $\tau_{1}(\alpha)< \lambda < \tau'_{1}(\alpha)$ and $\left(2\alpha -\lambda +\delta_{\alpha}\right) \left(\theta'_{\alpha} - \delta_{\alpha} \right) - 2(1-\alpha)^2 < 0$ for $\alpha<\alpha^{*}$.
\end{theorem}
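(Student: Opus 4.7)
The plan is to apply Theorem~\ref{thm: criteria three}: for $\lambda \in (\tau_1(\alpha),\tau'_1(\alpha))$ I will verify that the sum $\sum_{j=1}^{k-1}\prod_{i=j}^{k-1}\frac{(1-\alpha)^2}{b_i^2}$ associated with the $\alpha$-Shearer sequence diverges as $k\to\infty$, whence $\rho_\alpha(A_\alpha(G_k))\to\lambda$. The left endpoint $\lambda=\tau_1(\alpha)$ is then recovered from the closedness of the set of $A_\alpha$-limit points by taking a decreasing sequence $\lambda_n\downarrow\tau_1(\alpha)$ in the open interval.

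The central calculation is the pair product. From $b_j=\varphi_\alpha(b_{j-1})+r_j\delta_\alpha$ and the identity $t\,\varphi_\alpha(t)=(2\alpha-\lambda)t-(1-\alpha)^2$,
\[
b_{j-1}b_j = b_{j-1}\bigl(2\alpha-\lambda+r_j\delta_\alpha\bigr)-(1-\alpha)^2.
\]
Both $b_{j-1}$ and the parenthesised coefficient are negative, so $b_{j-1}b_j>0$; pushing to the most extreme configuration $r_j=1$, $b_{j-1}\to\theta'_\alpha-\delta_\alpha$ bounds it by $(2\alpha-\lambda+\delta_\alpha)(\theta'_\alpha-\delta_\alpha)-(1-\alpha)^2$, which under the hypothesis $(2\alpha-\lambda+\delta_\alpha)(\theta'_\alpha-\delta_\alpha)<2(1-\alpha)^2$ is strictly below $(1-\alpha)^2$. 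Hence $\frac{(1-\alpha)^4}{(b_{j-1}b_j)^2}>1$ uniformly in $j$; pairing consecutive factors in the partial products gives a uniform $c>1$ with $\prod_{i=j}^{k-1}\frac{(1-\alpha)^2}{b_i^2}\ge c^{\lfloor(k-j)/2\rfloor}$, so the series diverges.

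To locate the endpoints I would analyse
\[
F_1(\lambda,\alpha) := (2\alpha-\lambda+\delta_\alpha)(\theta'_\alpha-\delta_\alpha)-2(1-\alpha)^2
\]
on $(2,+\infty)\times[0,\alpha^*)$: compute $\lim_{\lambda\to 2^+}F_1>0$ and $\lim_{\lambda\to\infty}F_1$ (equal to $0^-$ for $\alpha=0$ and $+\infty$ for $\alpha>0$), and study $\partial_\lambda F_1$ to count zeros. At $\alpha=0$ the unique zero of $F_1(\cdot,0)$ is $\sqrt{2+\sqrt{5}}$ with $F_1<0$ on the entire ray beyond, so $\tau'_1(0)=+\infty$. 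For $0<\alpha<\alpha^*$ the function $F_1(\cdot,\alpha)$ has exactly two simple zeros $\tau_1(\alpha)<\tau'_1(\alpha)$ and is negative between them. The critical $\alpha^*$ is where those zeros coalesce: imposing $F_1=0$ and $\partial_\lambda F_1=0$ simultaneously, eliminating $\lambda$ and clearing radicals, yields an algebraic equation in $\alpha$ whose relevant positive solution is $(3-\sqrt{2})/7$.

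The main obstacle is that the extremal pair bound above is actually attained only in the sub-regime $r_j\ge 1$; in the complementary regime $r_j=0$, where $b_{j-1}\in(\theta_\alpha\theta'_\alpha/(\theta_\alpha+\delta_\alpha),\theta'_\alpha)$, a direct computation shows that $b_{j-1}b_j$ can in principle exceed $(1-\alpha)^2$. The observation that saves the argument is that $\varphi_\alpha$ sends the threshold $\theta_\alpha\theta'_\alpha/(\theta_\alpha+\delta_\alpha)$ precisely to $\theta'_\alpha-\delta_\alpha$: whenever a ``bad'' pair occurs, $b_j$ lands at the bottom of its allowed interval, forcing $r_{j+1}\ge 1$ at the next step, and the subsequent pair $b_jb_{j+1}$ equals exactly the controlled quantity $(2\alpha-\lambda+\delta_\alpha)(\theta'_\alpha-\delta_\alpha)-(1-\alpha)^2<(1-\alpha)^2$. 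Grouping each bad pair with the following controlled one restores the uniform geometric growth of the partial products; making this bookkeeping precise is the technical heart of the proof.
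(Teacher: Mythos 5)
Your handling of the regime $r_j\ge 1$ reproduces the paper's key estimate --- $b_{j-1}b_j\le(2\alpha-\lambda+\delta_\alpha)(\theta'_\alpha-\delta_\alpha)-(1-\alpha)^2<(1-\alpha)^2$ precisely when $F_1(\lambda,\alpha)<0$ --- and your endpoint analysis of $F_1$ (the paper instead factors $F_1=-F_0F_3$ and studies the two factors separately in Appendix~\ref{app:tau1}) and your closedness argument for including $\lambda=\tau_1(\alpha)$ (the paper uses the starlike trees of Theorem~\ref{thm: alpha stalike limit}) are acceptable variants, though both are left as sketches. The genuine gap is in the $r_j=0$ regime. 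You assert that a ``bad'' step sends $b_j$ to the bottom of its allowed interval and hence forces $r_{j+1}\ge 1$; this is false. The condition $r_{j+1}=0$ is $\varphi_\alpha(b_j)>\theta'_\alpha-\delta_\alpha$, i.e.\ $b_j$ may lie \emph{anywhere} in $\bigl(\theta_\alpha\theta'_\alpha/(\theta_\alpha+\delta_\alpha),\,\theta'_\alpha\bigr)$, not just at its left end; and since $\theta'_\alpha$ is a fixed point of the increasing map $\varphi_\alpha$, a value $b_j$ close to $\theta'_\alpha$ yields $b_{j+1}=\varphi_\alpha(b_j)$ still close to $\theta'_\alpha$, so the run of zeros continues. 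The paper's worked example with $\alpha=0.01$, $\lambda=2.06$ exhibits runs of nine or more consecutive zero $r_j$'s. Your bookkeeping --- matching each isolated bad factor with the single controlled pair that follows --- therefore cannot close the argument: a run of $m$ zeros produces $m$ consecutive factors $\frac{(1-\alpha)^2}{b_i^2}$ that may each be below $1$, against only one controlled pair at the end of the run.

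What is actually needed, and what the paper does, is a nested pairing across the entire run: with $r_j=\cdots=r_{j+m}=0$ and $r_{j+m+1}\neq 0$, one first gets $b_{j+m}b_{j+m+1}<(1-\alpha)^2$ from the $r\ge 1$ estimate, then inverts the recursion on the zero side, $b_{j+m-1}=\frac{(1-\alpha)^2}{(2\alpha-\lambda)-b_{j+m}}$, bounds $b_{j+m+2}>\varphi_\alpha(b_{j+m+1})$ on the other side, and multiplies to obtain $b_{j+m-1}b_{j+m+2}<(1-\alpha)^2$, iterating outward to $b_{j+m-2}b_{j+m+3}<(1-\alpha)^2$ and so on. One must also show that every run of zeros terminates, which follows because for $\lambda>\tau_0(\alpha)$ one has $\theta_\alpha<\theta'_\alpha-\delta_\alpha$, so the $\varphi_\alpha$-iterates, decreasing monotonically toward $\theta_\alpha$, eventually exit the interval that keeps $r=0$. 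Without these two ingredients the divergence required by Theorem~\ref{thm: criteria three} is not established on the subinterval $(\tau_1(\alpha),\tau_2(\alpha))$ where the runs of zeros genuinely occur.
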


\begin{proof} We notice that the case $\alpha=0$ is given by Shearer's result. So we may assume that $\alpha>0$. We let $G_k$ be the $\alpha$-Shearer sequence associated with a given $\lambda>2$. We want to show that $\displaystyle\lim_ {k\to \infty} \rho_{\alpha}(A_{\alpha}(G_k)) = \lambda$. Consider $b_{j+1}= 2\alpha -\lambda - \frac{(1-\alpha)^2}{b_{j}} + r_{j+1} \delta_{\alpha}$ and suppose that $r_{j+1} \geq 1$, we want to show that $\frac{(1-\alpha)^2}{b_{j}^2}\frac{(1-\alpha)^2}{b_{j+1}^2}>1$,  equivalently $(1-\alpha)^2> b_{j}b_{j+1}$  or $b_{j}b_{j+1} - (1-\alpha)^2<0$.  We observe that if this is true, then the sum in Theorem~\ref{thm: criteria three} diverges, proving our result.

Since $\delta_{\alpha}= \alpha+ \frac{(1-\alpha)^2}{\lambda  -\alpha} >0$ we have
$$b_{j+1}\geq 2\alpha -\lambda - \frac{(1-\alpha)^2}{b_{j}} +\delta_{\alpha}$$
$$b_{j+1}b_{j} \leq \left(2\alpha -\lambda +\delta_{\alpha}\right) b_{j} -(1-\alpha)^2.$$
As $2\alpha -\lambda +\delta_{\alpha}<0$ and $\theta'_{\alpha} - \delta_{\alpha} < b_{j}$, we obtain
$$b_{j+1}b_{j} \leq \left(2\alpha -\lambda +\delta_{\alpha}\right) \left(\theta'_{\alpha} - \delta_{\alpha} \right) -(1-\alpha)^2$$
$$b_{j+1}b_{j} -(1-\alpha)^2 \leq \left(2\alpha -\lambda +\delta_{\alpha}\right) \left(\theta'_{\alpha} - \delta_{\alpha} \right) -2(1-\alpha)^2$$

We now define the function $F_{1}:(2, +\infty)\times [0,1) \to \mathbb{R}$ given by
$$F_{1}(\lambda, \alpha):=\left(2\alpha -\lambda +\delta_{\alpha}\right) \left(\theta'_{\alpha} - \delta_{\alpha} \right) -2(1-\alpha)^2.$$
By construction, $b_{j}b_{j+1} - (1-\alpha)^2<0$ is a consequence of $F_{1}(\lambda, \alpha)<0$. By definition, when $\alpha \to 0$ both $\alpha -\lambda$ and $(2\alpha -\lambda)^2  - 4(1-\alpha)^2$ are respectively non zero and positive, so $F_{1}$ is continuous as algebraic combination of simple functions.

\begin{figure}[H]
  \centering
  \includegraphics[width=7cm]{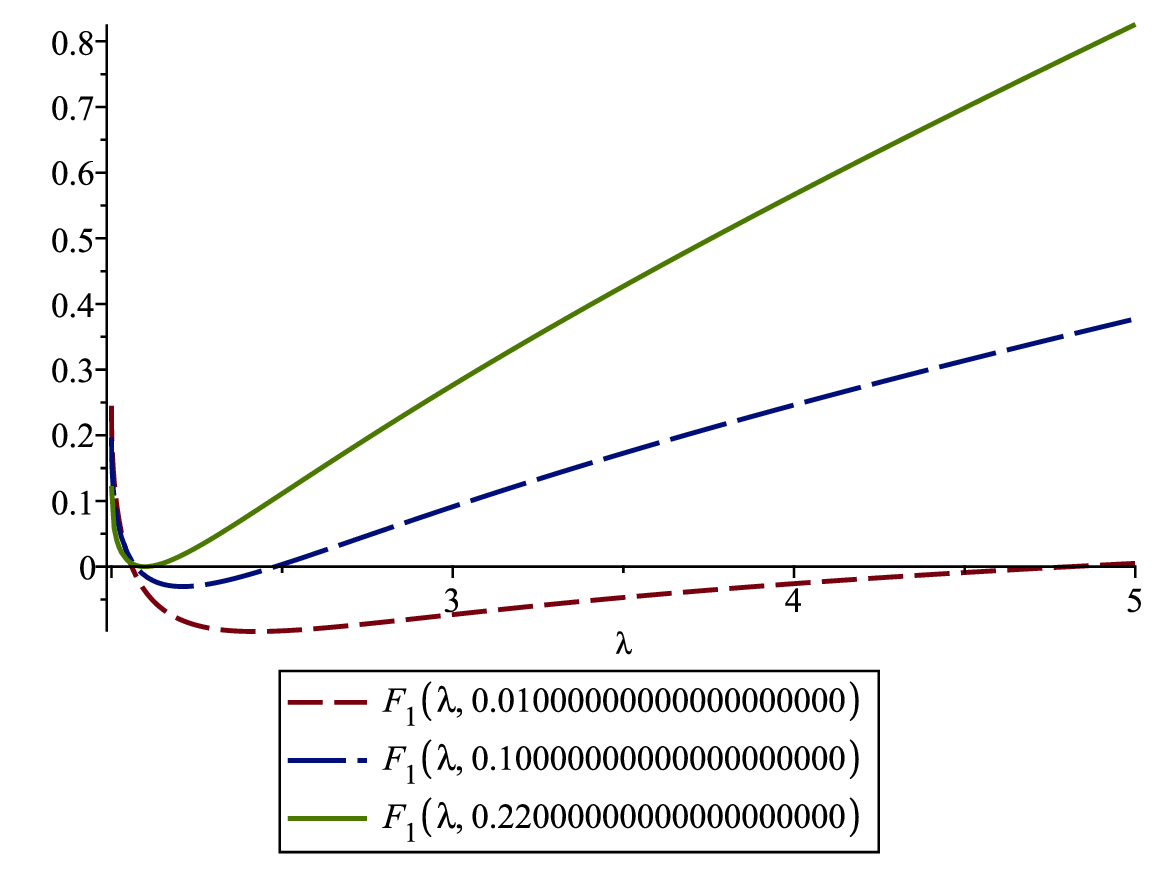}
  \caption{The graphs of $F_{1}(\lambda, 10^{-2}), F_{1}(\lambda, 10^{-1})$ and $F_{1}(\lambda, 0.22)$.}\label{fig:tau1}
\end{figure}

The inequality $F_{1}(\lambda, \alpha)<0$ implicitly defines $\lambda$ belonging to an interval which is a function of $\alpha$(see Figure~\ref{fig:tau1}). Let $(\tau_{1}(\alpha),\tau_{1}'(\alpha))$  be such function satisfying $F_{1}(\lambda, \alpha)<0$ for $\tau_{1}(\alpha)< \lambda < \tau_{1}'(\alpha)$ and $F_{1}(\tau_{1}(\alpha), \alpha)=0$ ($F_{1}(\tau'_{1}(\alpha), \alpha)=0$). Our proof that such numbers $\tau_{1}(\alpha)$ and $\tau_{1}'(\alpha)$ satisfying the above property actually exist, requires a series of technical (elementary) claims that are contained in Appendix~\ref{app:tau1}.

From the previous computations we see that while $r_{j+1}\neq 0$ we get  $ b_{j}b_{j+1}< (1-\alpha)^2$, guaranteeing our result. Otherwise, if $r_{j+1}=r_{j+2}=\cdots=r_{j+m}=0$ we are iterating the recursion
$$b_{j+1}= 2\alpha -\lambda - \frac{(1-\alpha)^2}{b_{j}}$$
which is the same  $Z_j$, that appear in  Theorem~\ref{thm: alpha stalike limit}. Hence, if $m$ is not finite we see that $b_{j+m}$ is monotonously decreasing to $\theta_{\alpha}$. Notice that
$\theta'_{\alpha} - \theta_{\alpha}> \delta_{\alpha}$ that is, $\theta_{\alpha} < \theta'_{\alpha} -  \delta_{\alpha}$, for any $\lambda > \tau_{1}(\alpha)$ (recall that $\delta_{\alpha} -(\theta'_{\alpha} - \theta_{\alpha}) = F_{0}(\lambda, \alpha)<0$ for $\lambda> \tau_{0}(\alpha)=\tau_{1}(\alpha)$ for $\alpha < \alpha^{*}$). Thus, if $b_{j+m}$ is close enough to  $\theta_{\alpha}$ then we have $\varphi_{\alpha}(b_{j+m}) < b_{j+m} < \theta'_{\alpha} - \delta_{\alpha}$ or $2\alpha -\lambda - \frac{(1-\alpha)^2}{b_{j+m}}+ \delta_{\alpha} < \theta'_{\alpha} $.
Since $b_{j+m+1}= 2\alpha -\lambda - \frac{(1-\alpha)^2}{b_{j+m}} + r_{j+m+1} \delta_{\alpha} < \theta'_{\alpha}$ we obtain $r_{j+m+1} \geq 1$. As a mater  of fact, such $m$ is globally bounded, but it depends on $\alpha$.

This means that in the $\alpha$-Shearer sequence of caterpillars $G_k$ associated with a given $\lambda > 2$, the number of consecutive vertices on the back nodes that have  no pendant  vertices is finite.

Consider then such a finite sequence with no pendant vertices, say $r_j=r_{j+1}=\ldots=r_{j+m}$ and $r_{j+m+1} \neq 0$. We have $b_{j+m}b_{j+m+1}< (1-\alpha)^2$ we recall that
$$b_{j+m}= 2\alpha -\lambda - \frac{(1-\alpha)^2}{b_{j+m-1}}$$
thus
$$b_{j+m-1} = \frac{(1-\alpha)^2}{( 2\alpha -\lambda ) - b_{j+m}}<0.$$
Also,
$$b_{j+m+2}= 2\alpha -\lambda - \frac{(1-\alpha)^2}{b_{j+m+1}} + r_{j+m+2} \delta_{\alpha}> 2\alpha -\lambda - \frac{(1-\alpha)^2}{b_{j+m+1}}.$$
Multiplying by the previous equation we get
$$b_{j+m-1}b_{j+m+2} < \left( 2\alpha -\lambda - \frac{(1-\alpha)^2}{b_{j+m+1}}  \right)\left( \frac{(1-\alpha)^2}{ ( 2\alpha -\lambda ) - b_{j+m}}\right)$$
$$b_{j+m-1}b_{j+m+2} <  \frac{(2\alpha -\lambda)b_{j+m+1} -(1-\alpha)^2}{( 2\alpha -\lambda )b_{j+m+1} - b_{j+m} b_{j+m+1}} \, (1-\alpha)^2.$$
As we already establish that  $b_{j+m} b_{j+m+1}< (1-\alpha)^2$ we get $\frac{(2\alpha -\lambda)b_{j+m+1} -(1-\alpha)^2}{( 2\alpha -\lambda )b_{j+m+1} - b_{j+m} b_{j+m+1}}<1$ thus $b_{j+m-1}b_{j+m+2} < (1-\alpha)^2$.

By repeating this same argument we conclude that $b_{j+m-2}b_{j+m+3} < (1-\alpha)^2$,\\ $b_{j+m-3}b_{j+m+4} < (1-\alpha)^2$, etc.

Consequently, the sum $1+  \frac{(1-\alpha)^2}{b_{k-1}^2}  +  \frac{(1-\alpha)^2}{b_{k-1}^2}\frac{(1-\alpha)^2}{b_{k-2}^2} + \cdots +  \frac{(1-\alpha)^2}{b_{k-1}^2}\cdots\frac{(1-\alpha)^2}{b_{1}^2}$ will blowup as $k \to \infty$ because wherever we get $\frac{(1-\alpha)^2}{b_{j}^2}<1$ we can match it with another  $j'$ such that $b_{j}b_{j'} < (1-\alpha)^2$ so that $\frac{(1-\alpha)^2}{b_{j}^2}\frac{(1-\alpha)^2}{b_{j'}^2}>1$.

According to Theorem~\ref{thm: criteria three}, it means that $\displaystyle\lim_ {k\to \infty} \rho_{\alpha}(A_{\alpha}(G_k)) = \lambda,$ for $\tau_{1}(\alpha) < \lambda < \tau_{1}'(\alpha)$.  To conclude the proof  we need to include $\lambda=\tau_{1}(\alpha)$. Notice that by Theorem~\ref{thm: alpha stalike limit} we get  $\displaystyle \lim_{n \to \infty}\rho_{\alpha}(A_{\alpha}(T_{1,n,n}))= \tau_{0}(\alpha)=\tau_{1}(\alpha)$, meaning that $\displaystyle\lim_ {k\to \infty} \rho_{\alpha}(A_{\alpha}(G_k)) = \lambda,$ for $\tau_{1}(\alpha) \leq \lambda < \tau_{1}'(\alpha)$.

The number $\alpha^{*}=\frac{3}{7}-\frac{\sqrt{2}}{7}$ is the maximum value for which there exists the open interval $(\tau_{1}(\alpha), \,\tau_{1}'(\alpha))$ for $\alpha<\alpha^{*}$.  Indeed, as we know, $\tau_{1}(\alpha)=\tau_{0}(\alpha)$ for small positive values of  $\alpha$. The limit point for this property is when $F_{1}$ has only one root denoted $\alpha^{*}$, which is necessarily  equal to $\tau_{0}(\alpha)$, that is, $\tau_{1}'(\alpha)=\tau_{0}(\alpha)$. As $F_{1}(\lambda, \alpha)=-F_{0}(\lambda, \alpha)\, F_{3}(\lambda, \alpha)$, the previous condition is equivalent to find $\alpha^{*} >0$ and $\lambda^{*}$ such that
\[\left\{
    \begin{array}{ll}
      F_{0}(\lambda^{*}, \alpha^{*})= \delta_{\alpha^{*}} - (\theta'_{\alpha^{*}} - \theta_{\alpha}) =0\\
      F_{3}(\lambda^{*}, \alpha^{*})=\delta_{\alpha^{*}} + \theta'_{\alpha^{*}}=0
    \end{array}
  \right.
\]
Subtracting the equations we get $2\theta'_{\alpha^{*}}= \theta_{\alpha^{*}}$, which is equivalent to
$$-\sqrt{\left(2 \alpha^{*} -\lambda^{*} \right)^{2}-4 \left(1-\alpha^{*} \right)^{2}}
 =
\alpha^{*} -\frac{\lambda^{*}}{2}+\frac{\sqrt{\left(2 \alpha^{*} -\lambda^{*} \right)^{2}-4 \left(1-\alpha^{*} \right)^{2}}}{2}.
$$
From this equation we obtain $\lambda^{*} = 2 \alpha^{*} +\frac{3 \sqrt{2}\, \alpha^{*}}{2}-\frac{3 \sqrt{2}}{2}$ or $\lambda^{*} = 2 \alpha^{*} -\frac{3 \sqrt{2}\, \alpha^{*}}{2}+\frac{3 \sqrt{2}}{2}$. By substituting this in the first equation we obtain four complex roots for the first equation and only one real root (plus two complex) for the second one, thus
$$\alpha^{*}=\frac{3}{7}-\frac{\sqrt{2}}{7} = 0.2265409196609... \text{ and }  \lambda^{*}=\frac{9}{7}+\frac{4 \sqrt{2}}{7}= 2.0938363213560...$$
We refer to Table~\ref{tab:tau1} for illustrating the values $\tau_{1}(\alpha)$ and $\tau_{1}'(\alpha)$, for a few values of $\alpha$.
\end{proof}
\begin{table}[H]
  \centering
 \begin{tabular}{|c|r|}
   \hline
   $\alpha$ & $(\tau_{1}(\alpha),\tau_{1}'(\alpha))$\quad\quad\quad $ $\\
   \hline
    0 &         $(2.058171027,   \infty)$ \quad\quad\quad\;\; $ $ \\
    $10^{-5}$ & $(2.058172154,  46.43683033)$  \\
    $10^{-4}$ & $(2.058182294, 21.58805390)$ \\
    $10^{-3}$ & $(2.058283826, 10.08827222)$  \\
    $10^{-2}$ & $(2.059312583, 4.810633985)$  \\
    $10^{-1}$ & $(2.071110742, 2.479706668)$  \\
    $0.22$ &   $(2.092435365, 2.103408681)$  \\
    $0.2265409$ &   $(2.093719372, 2.094603459)$  \\
   \hline
 \end{tabular}
  \caption{Values of the interval$ (\tau_{1}(\alpha),\tau_{1}'(\alpha))$}\label{tab:tau1}
\end{table}

\begin{remark}
  We would like to explain further details for the divergence reasoning in the proof of Theorem~\ref{thm: main alpha limit two}. The quantity
$$Q_k:=\frac{(1-\alpha)^2}{b_{k-1}^2}  +  \frac{(1-\alpha)^2}{b_{k-1}^2}\frac{(1-\alpha)^2}{b_{k-2}^2} + \cdots +  \frac{(1-\alpha)^2}{b_{k-1}^2}\cdots\frac{(1-\alpha)^2}{b_{1}^2}$$
will increase according to the number of summands that are bigger than 1. In the proof we have shown that the number $\frac{(1-\alpha)^2}{b_{j}^2}>1$ otherwise $\frac{(1-\alpha)^2}{b_{j-1}^2}\frac{(1-\alpha)^2}{b_{j}^2}>1$, provided $r_j \neq 0$. In  the case there are sequences of consecutive vertices of the caterpillar with no pendant vertices, we have a run $r_j=r_{j+1}=\ldots=r_{j+m}$ and $r_{j+m+1} \neq 0$ satisfying, $\frac{(1-\alpha)^2}{b_{j+m}^2}\frac{(1-\alpha)^2}{b_{j+m+1}^2}>1$, $\frac{(1-\alpha)^2}{b_{j+m-1}^2}\frac{(1-\alpha)^2}{b_{j+m+2}^2}>1$, and so on. Thus, fixed $k$, we see that $Q_k$ is greater than or equal to the number of cycles having the above property, where a cycle is an interval of integers $k-1, k-2,..., k-j$ such that $\frac{(1-\alpha)^2}{b_{k-1}^2}\cdots\frac{(1-\alpha)^2}{b_{k-j}^2}> 1$.  For example, suppose that  $\frac{(1-\alpha)^2}{b_{1}^2}>1$,  $\frac{(1-\alpha)^2}{b_{2}^2}>1$,  $\frac{(1-\alpha)^2}{b_{3}^2}>1$,   $\frac{(1-\alpha)^2}{b_{4}^2}<1$ but $\frac{(1-\alpha)^2}{b_{4}^2}\frac{(1-\alpha)^2}{b_{5}^2}>1$, $\frac{(1-\alpha)^2}{b_{6}^2}<1$ but $\frac{(1-\alpha)^2}{b_{6}^2}\frac{(1-\alpha)^2}{b_{15}^2}>1$, $\frac{(1-\alpha)^2}{b_{7}^2}\frac{(1-\alpha)^2}{b_{14}^2}>1$, $\frac{(1-\alpha)^2}{b_{8}^2}\frac{(1-\alpha)^2}{b_{13}^2}>1$,
$\frac{(1-\alpha)^2}{b_{9}^2}\frac{(1-\alpha)^2}{b_{12}^2}>1$, $\frac{(1-\alpha)^2}{b_{10}^2}\frac{(1-\alpha)^2}{b_{11}^2}>1$.\\
It is easy to see that we have five cycles $\{1\to 15, 2\to 15, 3\to 15, 4\to 15, 6\to 15\}$ and,
$$Q_{16}\geq \frac{(1-\alpha)^2}{b_{15}^2}\cdots\frac{(1-\alpha)^2}{b_{6}^2} + \frac{(1-\alpha)^2}{b_{15}^2}\cdots\frac{(1-\alpha)^2}{b_{4}^2} +\frac{(1-\alpha)^2}{b_{15}^2}\cdots\frac{(1-\alpha)^2}{b_{3}^2}+$$
$$+ \frac{(1-\alpha)^2}{b_{15}^2}\cdots\frac{(1-\alpha)^2}{b_{2}^2} + \frac{(1-\alpha)^2}{b_{15}^2}\cdots\frac{(1-\alpha)^2}{b_{1}^2} > 5.$$
Since the  number of cycles is unbounded as $k$ increases, so is $Q_k$.
\end{remark}

\begin{example}
   Consider $\lambda=2.06$ and $\alpha=0.01$. We notice that $\lambda \in (\tau_{1}(\alpha),\tau_{1}'(\alpha))= (2.059312583, 4.810633985)$ so Theorem~\ref{thm: main alpha limit two} holds, that is, if  $G_{k}(\lambda)=[r_1, r_2, \ldots, r_k]$ is the $\alpha$-Shearer sequence associated to $\lambda$, then $\displaystyle\lim_ {k\to \infty} \rho_{0.01}(A_{0.01}(G_k)) = 2.06.$\\
   As an illustration, we can check numerically the reasoning used at the proof. Taking $k=100$, we get\\
    $G_{k}(\lambda)=[2, 0, 0, 0, 0, 0, 0, 0, 0, 0, 1, 0, 0, 0, 0, 0, 0, 0, 0, 0, 0, 0, 0, 0, 0, 0, 0, 0, 0, 0, 0, 0, 0, 0, 1,$\\ $0, 0, 0, 0, 0, 0, 0, 0, 0, 0, 0, 0, 0, 0, 0, 0, 0, 0, 0, 0, 0, 0, 0, 0, 1, 0, 0, 0, 0, 0, 0, 0, 0, 0, 0, 0, 0, 0, 0, 0,$\\ $ 0, 0, 0, 0, 0, 0, 0, 0, 1, 0, 0, 0, 0, 0, 0, 0, 0, 0, 0, 0, 0, 0, 0, 0, 0]$.\\
   The diagonalization algorithm produces\\
   $\operatorname{Diag}(G_{100}(\lambda), -\lambda)=(b_1, \ldots, b_{100})= [-1.074, -1.127, -1.171, -1.203, -1.225, -1.24,$\\
   $ -1.25, -1.256, -1.259, -1.262, -0.775, -0.776, -0.776, -0.778,-0.78,-0.783, -0.788, $\\
   $-0.796, -0.808, -0.828, -0.856, -0.895, -0.945, -1.003, -1.063, -1.118, -1.163, \ldots, $\\
   $ -0.785, -0.791, -0.801, -0.816, -0.839, -0.872, -0.916, -0.97, -1.03, -1.088, -1.15]$.\\
   For $j=1$ we do not have $-0.99=-1+\alpha< b_{1}=-1.0738048780487808$ and $r_2=0$ so we do not have $ b_{1}b_{2}< (1-\alpha)^2$. The same goes to $r_3,\ldots, r_{10}$ but, as predicted we find $r_{11}\neq 0$ thus
   \begin{center}
   $ b_{10}b_{11}= 0.9780973959081004 < (1-\alpha)^2=0.9801$,\\
   $ b_{9}b_{12}= 0.9768462311806901 < (1-\alpha)^2=0.9801$,\\
   $\cdots$\\
   $ b_{1}b_{20}= 0.8888252835590791 < (1-\alpha)^2=0.9801$.
   \end{center}
   Looking to the next index we get $-0.99=-1+\alpha< b_{21}=-0.8559245912071809$, and the same is true until we reach $b_{24}=-1.0026610413051416$ then we repeat the previous analysis, because $r[25]=0$, until we reach $r[35] \neq 0$.\\
   By the way, a numerical computation shows that $$\rho_{0.01}(A_{0.01}(G_{100})) \simeq 2.059998455508993.$$
\end{example}

Now we can combine the previous results to obtain a neighborhood of $\alpha=0$ where the intervals $[\tau_{0}(\alpha), \infty)$ are entirely filled by $A_{\alpha}$-limit points.
\begin{corollary}\label{cor: intervals}
Let $0 \leq \alpha <1-\frac{2 \sqrt{5}}{5}= 0.105572809+$.
Then $\lambda$ is an $A_{\alpha}$-limit point for any $\lambda \geq \tau_{0}(\alpha)$.
\end{corollary}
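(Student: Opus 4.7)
The strategy is to merge the two covering intervals provided by Theorems~\ref{thm: main alpha limit one} and~\ref{thm: main alpha limit two} into the single half-line $[\tau_{0}(\alpha),+\infty)$. Observe first that the threshold $1-\tfrac{2\sqrt{5}}{5}\approx 0.1056$ lies strictly below both $\tfrac{1}{2}$ and $\alpha^{*}=\tfrac{3-\sqrt{2}}{7}\approx 0.2265$, so both theorems are applicable in the stated range of $\alpha$. Theorem~\ref{thm: main alpha limit two}, together with the identity $\tau_{1}(\alpha)=\tau_{0}(\alpha)$ established in its proof, yields that every $\lambda\in[\tau_{0}(\alpha),\tau_{1}'(\alpha))$ is an $A_{\alpha}$-limit point, while Theorem~\ref{thm: main alpha limit one} yields the same conclusion for every $\lambda\in[\tau_{2}(\alpha),+\infty)$. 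The union of these two intervals equals $[\tau_{0}(\alpha),+\infty)$ exactly when $\tau_{2}(\alpha)\leq \tau_{1}'(\alpha)$, so the whole problem reduces to characterizing the values of $\alpha$ for which this inequality holds.

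To locate the threshold, I would set $\tau_{1}'(\alpha)=\tau_{2}(\alpha)=\lambda$ and solve the pair $F_{1}(\lambda,\alpha)=0$ and $F_{2}(\lambda,\alpha)=0$ simultaneously. From $F_{2}(\lambda,\alpha)=0$ one extracts the relation $\theta'_{\alpha}-\delta_{\alpha}=\alpha-1$. Substituting this into $F_{1}(\lambda,\alpha)=(2\alpha-\lambda+\delta_{\alpha})(\theta'_{\alpha}-\delta_{\alpha})-2(1-\alpha)^{2}=0$ produces $(2\alpha-\lambda+\delta_{\alpha})(\alpha-1)=2(1-\alpha)^{2}$, hence
\[
\lambda=2+\delta_{\alpha}.
\]
Reinserting this into the defining expression $\delta_{\alpha}=\alpha+\tfrac{(1-\alpha)^{2}}{\lambda-\alpha}$, and using once more the identity $\theta'_{\alpha}-\delta_{\alpha}=\alpha-1$, a short elementary computation (expanding and clearing denominators) first gives $\delta_{\alpha}=\tfrac{1-\alpha}{2}$ and then reduces to the quadratic $5\alpha^{2}-10\alpha+1=0$, whose unique root in $(0,\tfrac{1}{2})$ is $\alpha^{**}:=1-\tfrac{2\sqrt{5}}{5}$, with associated value $\lambda=2+\tfrac{\sqrt{5}}{5}\approx 2.4472$.

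It remains to show that $\alpha^{**}$ is in fact the largest $\alpha$ for which $\tau_{1}'(\alpha)\geq \tau_{2}(\alpha)$. At $\alpha=0$ one has $\tau_{1}'(0)=+\infty>\tau_{2}(0)=2.324\ldots$ by Theorem~\ref{thm: main alpha limit two}, so the inequality holds strictly; combining this with the continuity of $\tau_{1}'$ and $\tau_{2}$ on $[0,\alpha^{*})$ and the uniqueness of the above positive root, one concludes $\tau_{1}'(\alpha)>\tau_{2}(\alpha)$ for every $\alpha\in[0,\alpha^{**})$, so the two intervals cover $[\tau_{0}(\alpha),+\infty)$ as claimed. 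The main technical obstacle is this last monotonicity/uniqueness argument: I expect to handle it by implicit differentiation of $F_{1}$ and $F_{2}$ with respect to $\alpha$, reusing the sign computations for $\partial F_{1}/\partial \lambda$ and $\partial F_{2}/\partial \lambda$ already carried out in Sections~\ref{sec:convergence} and~\ref{sec:small} together with the fact that the second root of $5\alpha^{2}-10\alpha+1$ lies outside $[0,1]$.
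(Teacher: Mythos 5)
Your proposal is correct and follows essentially the same route as the paper: both merge the intervals $[\tau_{1}(\alpha),\tau_{1}'(\alpha))$ and $[\tau_{2}(\alpha),+\infty)$ and locate the threshold by solving the system $\tau_{1}'(\alpha)=\tau_{2}(\alpha)$, arriving at $\alpha=1-\tfrac{2\sqrt{5}}{5}$ and $\lambda\approx 2.4472$. The only cosmetic difference is that you impose $F_{1}=0$ where the paper uses $F_{3}=0$ (equivalent at the root $\tau_{1}'$ since $F_{1}=-F_{0}F_{3}$), and you actually supply the algebra the paper omits, together with a slightly more explicit continuity argument for why the overlap persists on all of $[0,\alpha^{**})$.
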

\begin{proof}
   The proof is a consequence of Theorem~\ref{thm: main alpha limit one} and Theorem~\ref{thm: main alpha limit two} which claims that the intervals $[\tau_{1}(\alpha),\tau_{1}'(\alpha))$ (recall that $\tau_{0}(\alpha)= \tau_{1}(\alpha)$) and $[\tau_{2}(\alpha),\infty)$ are formed by $A_{\alpha}$-limit points for $0\leq \alpha < \alpha^* =0.226+$ and $0\leq \alpha < 1/2$, respectively (see Figure~\ref{fig:intervals}).
   Since $\tau_{2}$ increases to $\infty$ when $\alpha \to 1/2$ the overlap between these two intervals will occurs until
   $$\tau_{1}'(\alpha)=\lambda= \tau_{2}(\alpha).$$
   Thus we must solve the system
   \[\left\{
    \begin{array}{ll}
      F_{2}(\lambda, \alpha)= 0\\
      F_{3}(\lambda, \alpha)=0
    \end{array}
  \right.
\]
We will omit the computations which are very similar to the ones we performed earlier, producing
$$-1+3 \alpha +\frac{2 \left(1-\alpha \right)^{2}}{\lambda -\alpha} = 0.$$
From where we isolate
$$\lambda= \frac{\alpha^{2}+3 \alpha -2}{-1+3 \alpha}.$$
Substituting that in the first equation we get a single real solution $\alpha=1-\frac{2 \sqrt{5}}{5}= 0.105572809+$ which corresponds to $\lambda=\frac{-7+5 \sqrt{5}}{3 \sqrt{5}-5}= 2.4472135954+$. Those threshold points are depicted in the Figure~\ref{fig:intervals} along with all the information we have from our main results.
\end{proof}
\begin{figure}[H]
  \centering
  \includegraphics[width=18cm]{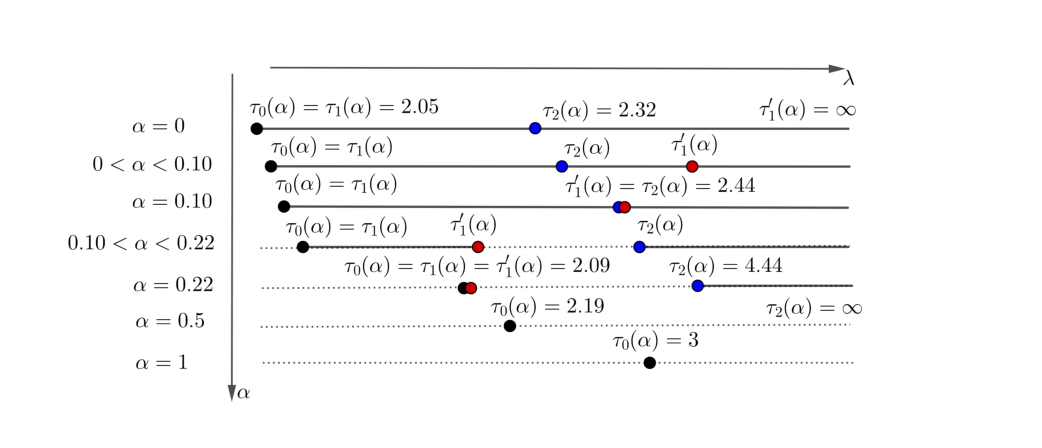}
  \caption{Summary of all information gathered from our results. The solid lines are intervals formed by $A_\alpha$-limit points.}\label{fig:intervals}
\end{figure}

\section{Concluding remarks}

We observe that Hoffman's original result \cite{hoffman1972limit} determine all the limit points of the adjacency matrix for $\lambda \in [2,\sqrt{2+\sqrt{5}})$. In fact, this is an enumerable set, in contrast with the extension given by Shearer that any $\lambda\geq \sqrt{2+\sqrt{5}}$ is a limit point. It would be interesting to study the $A_\alpha$ version of this result. A research problem is to characterize for small value $\alpha>0$, all $A_{\alpha}$-limit points smaller than $\tau_0(\alpha)=\tau_1(\alpha)$.

Finally, when $\tau_1'(\alpha)< \tau_2(\alpha)$ what happens in such gap? Notice that, for example, $\tau_1'(0.22)=2.103408681 < 2.692120306=\tau_2(0.22)$. The conclusion from  Theorem~\ref{thm: main alpha limit one} could hold in this interval because the proof uses only a sufficient condition.

\appendix
\section{The function $F_1$} \label{app:tau1}
Here we give a formal proof that $F_{1}(\lambda, \alpha):=\left(2\alpha -\lambda +\delta_{\alpha}\right) \left(\theta'_{\alpha} - \delta_{\alpha} \right) -2(1-\alpha)^2$ has two distinct roots $2< \tau_1(\alpha) < \tau'_1(\alpha)$ and $F_{1}(\lambda, \alpha)<0$ for $\tau_1(\alpha) < \lambda < \tau_1(\alpha)$ provided $\alpha \in [0,\alpha^*)$ where $\alpha^*=\frac{3}{7}-\frac{\sqrt{2}}{7}$.

\noindent \textbf{Claim 1:} $F_{1}(\lambda, \alpha)=-F_{0}(\lambda, \alpha)\, F_{3}(\lambda, \alpha)$ where $F_{0}(\lambda, \alpha) = \delta_{\alpha} - (\theta'_{\alpha} - \theta_{\alpha}) $ and $F_{3}(\lambda, \alpha) = \delta_{\alpha} + \theta'_{\alpha}$, where $F_{0}(\lambda, \alpha) $ is the same function of Theorem~\ref{thm: alpha stalike limit} defined by Equation~\eqref{eq:F0}. To see that we just use the already known relations such as $ 2\alpha -\lambda = \theta'_{\alpha} + \theta_{\alpha}$ and $\theta'_{\alpha}  \theta_{\alpha}= (1-\alpha)^2$.
Recall that
$$F_{1}(\lambda, \alpha)=\left(2\alpha -\lambda +\delta_{\alpha}\right) \left(\theta'_{\alpha} - \delta_{\alpha} \right) -2(1-\alpha)^2$$
and $\delta_{\alpha}=F_{0}(\lambda, \alpha)  + (\theta'_{\alpha} - \theta_{\alpha})$, thus
$$F_{1}(\lambda, \alpha)=\left(\theta'_{\alpha} + \theta_{\alpha}  +F_{0}(\lambda, \alpha)  + (\theta'_{\alpha} - \theta_{\alpha}) \right) \left(\theta'_{\alpha} - F_{0}(\lambda, \alpha)  - (\theta'_{\alpha} - \theta_{\alpha}) \right) -2(1-\alpha)^2=$$
$$=(F_{0}(\lambda, \alpha)  + 2 \theta'_{\alpha})(- F_{0}(\lambda, \alpha)  + \theta_{\alpha})-2(1-\alpha)^2=$$
$$=-F_{0}(\lambda, \alpha)^2 + F_{0}(\lambda, \alpha) \theta_{\alpha} - 2 F_{0}(\lambda, \alpha) \theta'_{\alpha} +2 \theta'_{\alpha} \theta_{\alpha} -2(1-\alpha)^2=$$ $$= -F_{0}(\lambda, \alpha) \left(F_{0}(\lambda, \alpha)- \theta_{\alpha} + 2 \theta'_{\alpha} \right)=-F_{0}(\lambda, \alpha) \left(\delta_{\alpha} - (\theta'_{\alpha} - \theta_{\alpha})- \theta_{\alpha} + 2 \theta'_{\alpha} \right)=$$
$$= -F_{0}(\lambda, \alpha) \left( \delta_{\alpha} + \theta'_{\alpha}\right)=-F_{0}(\lambda, \alpha)\, F_{3}(\lambda, \alpha).$$

\begin{figure}[H]
  \centering
  \includegraphics[width=7cm]{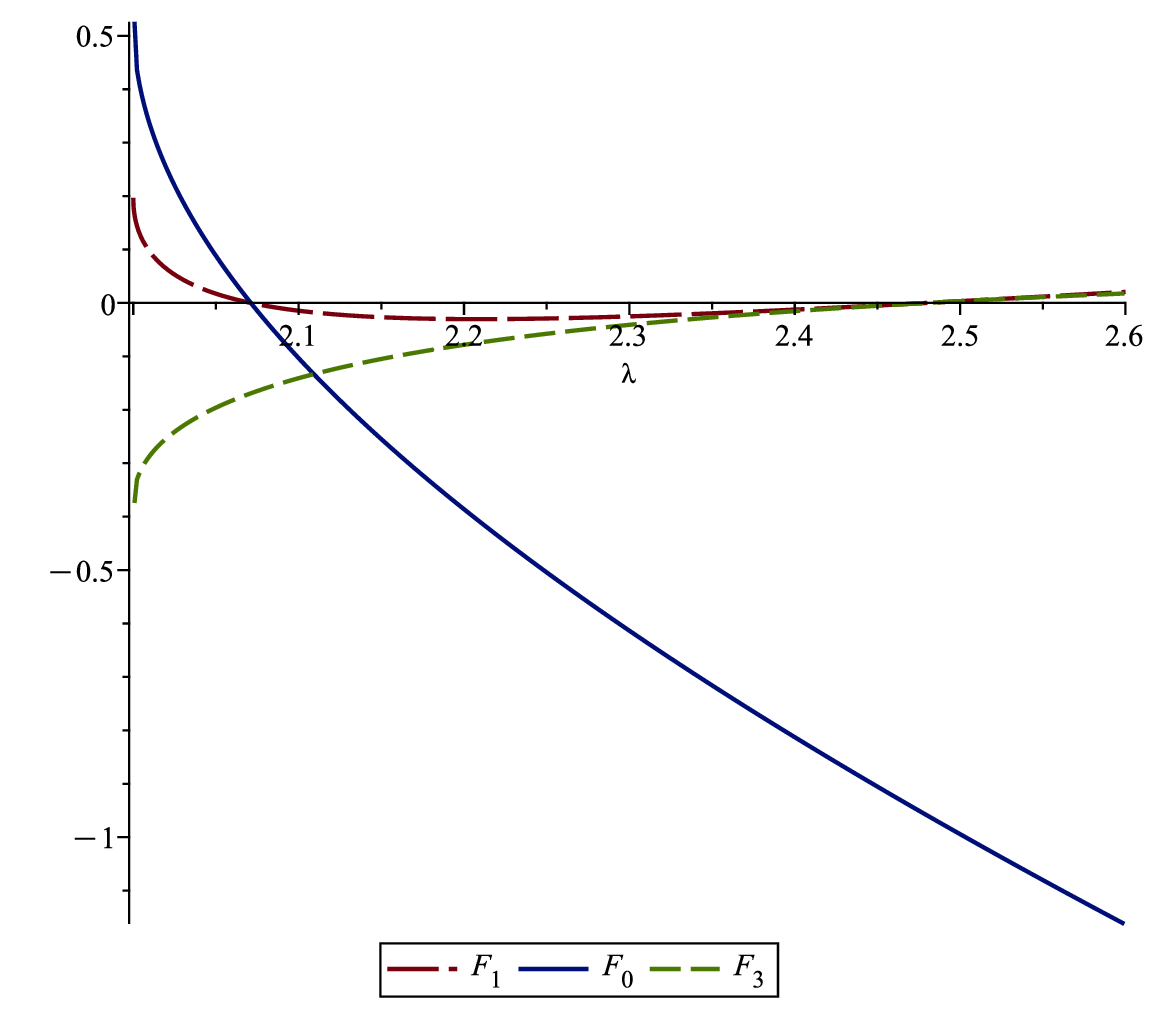}
  \caption{The graph of $F_{1}(\lambda, 0.1)$, $F_{0}(\lambda, 0.1)$ and $F_{3}(\lambda, 0.1)$. }\label{fig:tau0tau1}
\end{figure}
\vskip 1cm

\noindent \textbf{Claim 2:}  $F_{1}(\tau_{0}(\alpha), \alpha)=0$ (see from Figure~\ref{fig:tau0tau1} that both functions share a common root). In particular $\tau_{0}(\alpha)=\tau_{1}(\alpha)$ or $\tau_{0}(\alpha)=\tau_{1}'(\alpha)$.  It is evident from Claim 1, because $F_{0}(\tau_{0}(\alpha), \alpha)=0$, by definition.
\vskip 1cm

\noindent \textbf{Claim 3:} If $\alpha<1/4$ , then there exists an unique $\tau_{3}(\alpha)>2$ such that $F_{3}(\tau_{3}(\alpha), \alpha)=0$.\\
The proof is based in known properties of degree three polynomials. The equation $F_{3}(\lambda, \alpha)=0$ is equivalent to
$$2 \alpha +\frac{\left(1-\alpha \right)^{2}}{\lambda -\alpha}-\frac{\lambda}{2}+\frac{\sqrt{\left(2 \alpha -\lambda \right)^{2}-4 \left(1-\alpha \right)^{2}}}{2}=0,$$
which is equivalent to $\lambda$ be a real root of the polynomial $P_3(\lambda)=\lambda^3 + 5 \alpha \lambda^2 - (- 4\alpha ^ 2 - 6\alpha + 3) \lambda - \alpha^3 - 2\alpha^2 -3 \alpha +4 -\alpha^{-1}$. Performing a change of variables $\lambda (x):= x +\frac{5\alpha}{3}$ we obtain $ Q_3(x)= x^3 +(-13/3\,{\alpha}^{2}+6\,\alpha-3)x+(-{\frac {97\,{\alpha}^{3}}{27}}+8\,{\alpha}^{2}-8\,\alpha+4-{\alpha}^{
-1})$, whose roots are just a horizontal shift of those of $P_3$. Meaning that $P_3(\lambda)$ has a single real root if and only if $Q_3(\lambda)$ has a single real root. We know that a polynomial of degree three $x^3 + a x + b$ has only one real root if and only if $d:=-(4a^3+27b^2)<0$ where $a:=-13/3\,{\alpha}^{2}+6\,\alpha-3$ and $b:=-{\frac {97\,{\alpha}^{3}}{27}}+8\,{\alpha}^{2}-8\,\alpha+4-{\alpha}^{
-1}$, so we want to show
$$d(\alpha)= -23 \alpha^{6}+200 \alpha^{5}-732 \alpha^{4}+1496 \alpha^{3}-1886 \alpha^{2}+1512 \alpha -756+\frac{216}{\alpha}-\frac{27}{\alpha^{2}} <0,$$ for $0\leq\alpha<1/4$.\\
It will be useful to define some auxiliary quantities
$$n(\alpha):= -23 \alpha^{6}-732 \alpha^{4}-1886 \alpha^{2} \text{ and } p(\alpha):= 200 \alpha^{5}+1496 \alpha^{3}+1512 \alpha.$$
Also, define the increasing function $g(\alpha):= \frac{216}{\alpha}-\frac{27}{\alpha^{2}}$. By construction, we have $$d(\alpha)=n(\alpha)+p(\alpha)-756+g(\alpha).$$
Let us divide our proof in two parts. \\
(a) If $\alpha \leq 1/8$:\\

It is easy to see that, $n(\alpha)<0$ (decreasing), $p(\alpha)>0$ (increasing), $g(1/8)=0$ and $g(\alpha)<0$  for $\alpha < 1/8$. since $d(\alpha)=n(\alpha)+p(\alpha)-756+g(\alpha)$, we have  $$d(\alpha)=n(\alpha)+p(\alpha)-756 + g(\alpha)< 0 +p(1/8)-756 + 0= {\frac{786137}{4096}} -756 =-{\frac{2310439}{4096}}<0.$$

\noindent (b)$1/8<\alpha<1/4$:\\
We claim that $d'(\alpha)>0$, thus $d(\alpha)<d(1/4)=-176823/4096<0$. To prove that we compute $d'(\alpha)= n'(\alpha)+p'(\alpha)-756' + g'(\alpha)= (-138 \alpha^{5}-2928 \alpha^{3}-3772 \alpha) + (1000 \alpha^{4}+4488 \alpha^{2}+1512) + \left(-\frac{216}{\alpha^{2}}+\frac{54}{\alpha^{3}}\right)$.
Note that $-\frac{216}{\alpha^{2}}+\frac{54}{\alpha^{3}}= \alpha^{-3} ( 54 -216 \alpha)$ and $( 54 -216 \alpha)> 0$ for $1/8<\alpha<1/4$. For the first part we rewrite $(-138 \alpha^{5}-2928 \alpha^{3}-3772 \alpha) + (1000 \alpha^{4}+4488 \alpha^{2}+1512)$ as
$$\left(1000-138 \alpha \right) \alpha^{4}+\left(4488-2928 \alpha \right) \alpha^{2}+ (1512-3772 \alpha)> $$ $$>\left(1000-138(1/4) \right) \alpha^{4}+\left(4488-2928 (1/4) \right) \alpha^{2}+ (1512-3772 (1/4))=$$ $$= \frac{1931}{2} \alpha^{4}+3756 \alpha^{2}+569 >0.$$
\vskip 1cm

\noindent \textbf{Claim 4:} $F_{3}(\tau_{0}(\alpha), \alpha)<0$ and $\displaystyle \lim_{\lambda \to \infty}F_{3}(\lambda, \alpha)= \alpha >0$.\\
To prove that we start with the hardest part, $F_{3}(\tau_{0}(\alpha), \alpha)<0$. For $\lambda=\tau_{0}(\alpha)$ we have $F_{0}(\lambda, \alpha) =0$ so that
$ \delta_{\alpha} - (\theta'_{\alpha} - \theta_{\alpha}) =0$ and we would like to have $\delta_{\alpha} + \theta'_{\alpha} <0.$
Substituting the first one in the second expression, we obtain $2  \theta'_{\alpha}  <\theta_{\alpha}$, or
$$\left(2 \alpha -\lambda \right) + \sqrt{\left(2 \alpha -\lambda \right)^{2}-4 \left(1-\alpha \right)^{2}} > \frac{\left(2 \alpha -\lambda \right) - \sqrt{\left(2 \alpha -\lambda \right)^{2}-4 \left(1-\alpha \right)^{2}}}{2}.$$
A tedious computation shows that this inequality is equivalent to
$\lambda <\frac{3 \sqrt{2}\,}{2}(1-\alpha)+ 2 \alpha.$ Thus, we need to show that
$\tau_{0}(\alpha)< \frac{3 \sqrt{2}\,}{2}(1-\alpha)+ 2 \alpha.$ Since $\tau_{0}$ is an increasing function of $\alpha$ we obtain $\tau_{0}(\alpha)< \tau_{0}(\alpha^*)= \frac{9}{7}+\frac{4 \sqrt{2}}{7}$.   The function $\frac{3 \sqrt{2}\,}{2}(1-\alpha)+ 2 \alpha$ is decreasing with $\alpha$ so it attains its minimum for the interval $[0,\alpha^*]$ at $\alpha^*$ with the same value  $\frac{9}{7}+\frac{4 \sqrt{2}}{7}$.  Thus,
$$\tau_{0}(\alpha)< \tau_{0}(\alpha^*) \leq \frac{3 \sqrt{2}\,}{2}(1-\alpha)+ 2 \alpha,$$
for $\alpha \in [0,\alpha^*)$.\\
To see that $\displaystyle \lim_{\lambda \to \infty}F_{3}(\lambda, \alpha)= \alpha >0$ we just rewrite the formula
$$F_{3}(\lambda, \alpha)=  \alpha +\frac{\left(1-\alpha \right)^{2}}{\lambda -\alpha} +\frac{\left(2 \alpha -\lambda \right) + \sqrt{\left(2 \alpha -\lambda \right)^{2}-4 \left(1-\alpha \right)^{2}}}{2}=$$
$$=  \alpha +\frac{\left(1-\alpha \right)^{2}}{\lambda -\alpha} +
\frac{2 \left(1-\alpha \right)^{2}}{ \left(\left(2 \alpha -\lambda \right) - \sqrt{\left(2 \alpha -\lambda \right)^{2}-4 \left(1-\alpha \right)^{2}}\right)},$$
and, except by the first summand, the others vanishes when $\lambda \to \infty$.
\vskip 1cm

\noindent \textbf{Claim 5:} In the previous conditions, $\tau_{3}(\alpha)> \tau_{0}(\alpha)$ is the second root of $F_{1}$, meaning that $\tau_{1}(\alpha)=\tau_{0}(\alpha)< \tau_{3}(\alpha)=\tau_{1}'(\alpha)$ and $F_{1}(\lambda, \alpha)<0$ for $\tau_{1}(\alpha)< \lambda < \tau_{1}'(\alpha)$.\\
Indeed, $F_{3}(\tau_{3}(\alpha), \alpha)=0$ means that $F_{1}(\tau_{3}(\alpha), \alpha)=0$. Suppose, by contradiction, that $\tau_{0}(\alpha)> \tau_{3}(\alpha)$. As $F_{3}(\tau_{0}(\alpha), \alpha)<0$ and $\displaystyle \lim_{\lambda \to \infty}F_{3}(\lambda, \alpha)= \alpha >0$ the continuity of $F_{3}$ ensures that we can find a new real root for $F_{3}$, different from $\tau_{3}(\alpha)$, a contradiction with the uniqueness of $\tau_{3}(\alpha)$.  So far we have concluded that $\tau_{1}(\alpha)=\tau_{0}(\alpha)< \tau_{3}(\alpha)=\tau_{1}'(\alpha)$ are the only two consecutive roots of $F_1$. In addition, we recall that $-F_0$ is negative for $\lambda < \tau_{1}(\alpha)$ and  positive for $\lambda > \tau_{1}(\alpha)$ and, by the uniqueness of $\tau_{3}(\alpha)$, we note that $F_3$ is negative for $\lambda < \tau_{1}'(\alpha)$ and  positive for $\lambda >  \tau_{1}'(\alpha)$. Thus, $F_{1}(\lambda, \alpha)=-F_{0}(\lambda, \alpha)\, F_{3}(\lambda, \alpha)<0$ for $\tau_{1}(\alpha)< \lambda < \tau_{1}'(\alpha)$.
\vskip 1cm

\section*{Acknowledgments}
This work is partially supported CNPq under grant 408180/2023-4 - Chamada MCTI Nº 10/2023 - UNIVERSAL. V. Trevisan also acknowledges partial support of MATH-AMSUD under project GSA, brazilian team financed by CAPES - project 88881.694479/2022-01, and CNPq grant 310827/2020-5 .

\end{document}